%% file: main.tex
\documentclass[11pt]{article}
\usepackage{amsmath, amsthm, amssymb, amsfonts}
\usepackage[alphabetic]{amsrefs}
\usepackage{mathrsfs}
\usepackage{thmtools}
\usepackage{graphicx}
\usepackage[margin=1in]{geometry}
\usepackage{tikz-cd}
\usepackage[dvipsnames]{xcolor}
\usepackage{import}
\usepackage{quiver}
\usepackage{authblk}
\usepackage{titlesec}
\usepackage{subcaption}
\usepackage{svg}

\usepackage[colorlinks=true]{hyperref}

\def\l{\left}
\def\r{\right}
\def\lr{\leftrightarrow}

\def\N{\mathbb{N}}
\def\Z{\mathbb{Z}}

\def\a{\alpha}
\def\b{\beta}
\def\g{\gamma}

\def\vp{\varphi}
\def\map{\text{Map}}
\def\Aut{\text{Aut}}
\def\End{\text{End}}

\def\squig{\leftrightsquigarrow}

\newtheorem{theorem}{Theorem}[section]
\newtheorem{ntheorem}{Theorem}
\newtheorem{definition}[theorem]{Definition}

\newtheorem{ncorollary}[ntheorem]{Corollary}
\newtheorem{proposition}[theorem]{Proposition}

\newtheorem{lemma}[theorem]{Lemma}

\titleformat*{\section}{\scshape\center\large}

\renewcommand{\abstractname}{\textmd{\textsc{Abstract}}}
\renewenvironment{abstract}
 {\small
  \begin{center}
  \abstractname\vspace{0pt}\vspace{0pt}
  \end{center}
  \quotation}
 {\endquotation}

\title{\large\textsc{Hatcher--Thurston complex for surfaces with non-planar ends}}
\author{\normalsize Manvendra Somvanshi}
\date{}

\begin{document}

\maketitle
\begin{abstract}
In this paper, for each $k\in \N$, we define a complex $\Gamma_k(S)$ for an infinite-type surface $S$ with non-planar ends, which serves as an analog of the Hatcher--Thurston complex for the infinite-type setting. We show that $\Gamma_k(S)$ is connected, simply connected, and that the automorphism group of $\Gamma_k(S)$ is isomorphic to the extended mapping class group.
\end{abstract}
\section*{Introduction}\label{intro}
\addcontentsline{toc}{section}{Introduction}
All surfaces in this paper will be connected, orientable, without any punctures or marked points, and possibly with non-empty compact boundary. The mapping class group, $\map(S)$, is the group of all orientation preserving homeomorphisms of the surface $S$ up to isotopy. The extended mapping class group, $\map^*(S)$ is the group of all homeomorphisms of $S$ up to isotopy. When the fundamental group $\pi_1(S)$ of the surface $S$ is finitely generated, we say that $S$ is of finite-type; otherwise $S$ is infinite-type.\\

Recently, there has been an effort in the literature to generalize results about curve graphs and complexes from the finite-type case to the infinite-type case. In \cite{bavard}, the authors show that the automorphism group of the curve complex, $ \mathcal{C}(S)$, of an infinite-type surface with empty boundary is geometric, that is, every automorphism of $ \mathcal{C}(S)$ is induced by the action of $\map^*(S)$, generalizing the result of \cites{ivanov, luo1999, korkmaz} for finite-type surfaces. Independently, \cite{hernandez} also proved that the automorphism groups of the curve complex, non-separating curve complex, and the Schmutz graph are all isomorphic to each other and to $\map^*(S)$ for surfaces of infinite-type with no planar ends and finitely many boundary components. Moreover, \cite{hernandez, bavard} show that these complexes can differentiate infinite-type surfaces. In \cite{branman}, the author generalizes the result that automorphism group of the pants complex is isomorphic to the extended mapping class group, a result originally proved in \cite{margalit04} for finite-type surfaces. \\

In this paper, we generalize the Hatcher--Thurston complex to the setting of infinite-type surfaces with possible compact boundary and non-planar ends. The Hatcher--Thurston complex or Cut-system complex, $ \mathcal{HT}(S)$, is a 2-dimensional CW-complex associated to surfaces of finite-type in which the vertices are maximal cut-systems of the surface, edges correspond to an elementary move between cut-systems, and 2-cells are attached to triangles, rectangles, and pentagons as described in Section \ref{pre}. This cut-system complex was first introduced in \cite{hatcher} to obtain a presentation for the mapping class group of finite-type surfaces. In \cite{hatcher}, the authors use a Morse-theoretic proof to show that the complex is connected and simply connected. Later, Wajnryb gave a combinatorial proof of this in \cite{wajnryb}. In \cite{irmak2004}, the automorphisms of the Hatcher--Thurston complex were shown to be geometric. In this paper, the complex $ \mathcal{HT}(S)$ is generalized to a countable collection of complexes $\l(\Gamma_k(S)\r)_{k\in \N}$ whose vertices are given by cut-systems with exactly $k$ non-separating curves, as opposed to maximal cut-systems, with $1-$cells and $2-$cells defined in a similar fashion to $ \mathcal{HT}(S)$. In case that $S$ is a finite-type surface of genus $g$, the collection $\{\Gamma_k(S)\}$ is finite with $1\leq k \leq g$ and $\Gamma_g(S) = \mathcal{HT}(S)$. Precise definitions of these complexes are given in Section \ref{pre}. Since the construction of $1$-cells and $2-$cells of the pants complex in \cite{branman} is similar to that of the Hatcher-Thurston complex, generalization of the pants complex to infinite-type surfaces leads to similar issues with connectedness as the generalization of the Hatcher-Thurston complex. In \cite{branman} these issues are resolved by introducing a family of new topologies on the vertex space and extending it to the enitre pants space. As will be discussed in Section \ref{pre}, we resolve this issue, among others, by instead defining the sequence $(\Gamma_k(S))_{k\in \N}$ of complexes discussed above. In Section \ref{con}, we show the following result.
\begin{ntheorem}\label{thm:simply} 
  Let $S$ be a surface of infinite-type with non-planar ends. Then $\Gamma_k(S)$ is connected and simply connected for each $k\in \N$.
\end{ntheorem}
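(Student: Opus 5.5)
The plan is to reduce both connectivity statements to the finite-type case, where they are known (Hatcher--Thurston \cite{hatcher}, Wajnryb \cite{wajnryb}). We then pass to the infinite-type surface by a compact exhaustion argument.

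The key observation is that every vertex of $\Gamma_k(S)$ is a cut-system of exactly $k$ non-separating curves, so it is a finite object. Likewise, any edge path or edge loop in $\Gamma_k(S)$ involves only finitely many curves, and so lies in a compact subsurface of $S$. Fix an exhaustion $S_1\subset S_2\subset\cdots$ of $S$ by compact essential subsurfaces. Since $S$ has a non-planar end, we may choose them so that the genus of $S_n$ tends to infinity, and so that each complementary component of $S_n$ is non-planar, or at least has positive genus.

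For a compact subsurface $F$ of genus $g\geq k$, we would define a subcomplex $\Gamma_k(F)\subset\Gamma_k(S)$. Its vertices are cut-systems of $k$ curves contained in $F$. Its edges and $2$-cells are the triangles, rectangles and pentagons of Section \ref{pre} supported in $F$. The main technical lemma is that $\Gamma_k(F)$ is connected and simply connected whenever the genus of $F$ is large relative to $k$.

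To prove the lemma, I would relate $\Gamma_k(F)$ to the classical complex $\mathcal{HT}(F')$, where $F'$ is $F$ with its boundary components capped off. There is a natural map $\mathcal{HT}(F')\to\Gamma_k(F')$ that forgets curves, but it requires a choice of which $k$ curves to keep. A cleaner route is to mimic Wajnryb's combinatorial proof directly for cut-systems of size $k$:
\begin{enumerate}
\item Show connectivity by induction on the intersection number between two cut-systems, using the standard surgery that reduces intersection. The extra genus ensures that a disjoint non-separating curve can always be found.
\item Show simple connectivity by filling any loop. Extend each vertex of the loop to a maximal cut-system, producing a loop in $\mathcal{HT}(F')$. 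This loop bounds a disc there. Project the disc back by forgetting the extra curves.
\end{enumerate}
The check needed in the second step is that each triangle, rectangle or pentagon of $\mathcal{HT}$ projects either to a degenerate cell or to a $2$-cell of $\Gamma_k$. I expect this to hold, because elementary moves on the kept curves are exactly the moves in $\Gamma_k$, and moves on the forgotten curves become constant. The extensions to maximal cut-systems at consecutive vertices must be chosen compatibly, so that consecutive extended systems differ by a path in $\mathcal{HT}$ whose image is constant or a single edge. Handling this compatibility, together with the capping-off and the curves that become separating or inessential under capping, is where I expect the main obstacle to lie. My first attempt would be to choose the extension curves inside a fixed handle of $F$ disjoint from the loop's curves. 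Such a handle exists by taking $n$ large, which is exactly where non-planar ends are used.

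Given the lemma, connectivity of $\Gamma_k(S)$ follows because any two vertices lie in some $\Gamma_k(S_n)$. Simple connectivity follows because any edge loop lies in some $\Gamma_k(S_n)$ and bounds a disc there, hence in $\Gamma_k(S)$. Finally, a cut-system in $S_n$ must remain a cut-system in $S$, meaning its complement in $S$ stays connected. This holds because every complementary component of $S_n$ meets $S_n$ along its boundary, which lies in a single region of $S_n$ minus the curves once $n$ is enlarged appropriately, and this is what the choice of exhaustion guarantees.
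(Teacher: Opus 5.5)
Your simple-connectivity step fails as stated, because there is no ``forget the extra curves'' map from a filling disc in $\mathcal{HT}(F')$ to $\Gamma_k$. The interior vertices of such a disc are arbitrary maximal cut systems, and nothing marks which $k$ curves to keep; your fixed-handle curves need not occur in them. Any choice that sends edges to edges or constants is forced to follow the curves along edges. If $v\lr w$ replaces $a$ by $b$, the kept set at $w$ must equal the one at $v$, or be that set with $a$ replaced by $b$. This tracking has monodromy. Take the pentagon $v_0\lr v_2\lr v_4\lr v_1\lr v_3\lr v_0$ with $v_i=(b_i,b_{i+2},\dots)$. The slot starting at $b_0$ becomes $b_4$, then $b_3$, then $b_2$, while the other slot goes $b_2\to b_1\to b_0$. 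Keeping one slot and forgetting the other sends the pentagon's boundary to $b_0,b_4,b_4,b_3,b_3,b_2$, which is not even closed. So your expectation that every $2$-cell projects to a degenerate cell or a $2$-cell is false, and nothing controls which pentagons appear in the disc. The extension step also fails. If the loop's curves fill a compact $K$ of genus $h>k$, then $v\cup E$ with $E$ outside $K$ is never maximal, since $K$ cut along $v$ still has genus $h-k$. The extending curves must therefore lie in $K$ and depend on $v$, and that is exactly what breaks compatibility.

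The lemma you aim for, simple connectivity of $\Gamma_k(F)$ for compact $F$, is also stronger than you need, and it is the finite-type question the paper's Remark leaves open for $k<g$. It suffices that a loop supported in a compact $K$ dies once you use room outside $K$. The paper proves this directly, without $\mathcal{HT}$:
\begin{itemize}
\item For $k=1$ it builds $b_0,b_1,b_2$ in the non-planar part of $S$, so the loop has radius $1$ with respect to $b_2$ and is filled by triangles and four-cycles (Proposition~\ref{prop:base-case}).
\item For general $k$ it takes a non-separating $b$ in $S-K$ and reroutes the loop through vertices containing $b$, giving radius $0$ with respect to $b$.
\item It then kills radius-$0$ loops by Wajnryb's segment reduction with induction on $k$ (Lemma~\ref{lem:radius-zero}, via Propositions~\ref{prop:sp-radius-0} and~\ref{prop:hex-path}).
\end{itemize}
Connectivity likewise comes from a far-away curve $d$ and Lemma~\ref{lem:common}, not from intersection-reducing surgery. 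Finally, your closing concern points the wrong way. A cut system of $S_n$ is automatically one of $S$. It is the converse that can fail: curves may jointly separate $S_n$ but not $S$, or become separating or isotopic after capping.
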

The proof of this result closely follows the combinatorial proof of \cite{wajnryb} with alterations made when required for the infinite-type case. The proof proceeds by induction on $k$. We first show that the $1-$skeleton, $\Gamma^1_k(S)$, is connected and that its diameter is bounded between $k$ and $8k-4$ using induction on $k$. To further prove that $\Gamma_k(S)$ is simply connected, we introduce the notions of radius of a path in $\Gamma_k(S)$ and segments, following the proof by Wajnryb. The remaining proof can then be divided into three steps. It is first shown in Proposition \ref{prop:base-case} that $\Gamma_1(S)$ is simply-connected by first showing that closed paths with radius $1$ are contractible. Note that this argument uses the assumption that $S$ has at least one non-planar end. Continuing the induction on $k$, the second step shows that any closed 0-radius path is contractible in $\Gamma_k(S)$, see Proposition \ref{lem:radius-zero}, by induction on the number of segments in the path. Finally, again using the result about radius $0$ paths and the assumption of a non-planar end the theorem statement follows.\\

\textbf{Remark.} At multiple points throughout the proof of Theorem \ref{thm:simply} the existence of at least one non-planar end is invoked, leading one to ask whether this is a necessary condition. In other words, is $\Gamma_k(S)$ simply connected for all $1\leq k \leq g$ when $S$ is a genus $g$ surface? Since $\Gamma_g(S) = \mathcal{HT}(S)$ for finite-genus surfaces, as mentioned earlier, it follows that $\Gamma_g(S)$ is simply-connected. But the proof of this, in \cite{wajnryb, hatcher}, makes use of the maximality of the cut-systems with $g$ curves in a genus $g$ surface. To the author's knowledge, this question does not seem to be answered in the literature.\\

In Section \ref{aut}, we show that automorphisms of $\Gamma_k(S)$ are geometric:
\begin{ntheorem}\label{thm:geometric}
  Let $S$ be a surface of infinite-type with no planar ends and empty boundary. Then $\text{Aut}(\Gamma_k(S))$ is isomorphic to $\map^*(S)$ for each $k\in \N$.
\end{ntheorem}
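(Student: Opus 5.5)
The strategy is the standard one for rigidity results: construct a homomorphism $\Phi\colon \Aut(\Gamma_k(S))\to \Aut(\mathcal{N}(S))$, where $\mathcal{N}(S)$ is the non-separating curve complex. We then invoke the result of \cite{hernandez} that $\Aut(\mathcal{N}(S))\cong \map^*(S)$ for infinite-type surfaces with no planar ends and empty boundary. Finally we check that $\Phi$ composed with the natural map $\map^*(S)\to \Aut(\Gamma_k(S))$ is an inverse.

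The natural map $\map^*(S)\to\Aut(\Gamma_k(S))$ exists because homeomorphisms preserve cut-systems with $k$ curves, elementary moves, and the triangles, rectangles and pentagons spanning $2$-cells. It is injective by an Alexander-method argument: a mapping class fixing every $k$-cut-system fixes every non-separating curve, since each such curve lies in some cut-system. It is then trivial by \cite{hernandez}, because it acts trivially on $\mathcal{N}(S)$.

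The main work is combinatorial recognition of curves inside $\Gamma_k(S)$, following \cite{irmak2004}.

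\begin{enumerate}
\item Characterize triangles. Since all $2$-cells are part of the structure, an automorphism permutes triangles, rectangles and pentagons. A triangle corresponds to three cut-systems sharing $k-1$ curves, with the changed curves pairwise meeting once.
\item Define the set $V_C$ of vertices containing a fixed $(k-1)$-curve subsystem $C$. Adjacent vertices $v,w$ differ in exactly one curve, so they share a $(k-1)$-subsystem $C_{vw}$. I would characterize "the same removed subsystem" intrinsically: two edges $e,e'$ out of $v$ drop the same curve of $v$ iff $e$ and $e'$ lie in a common triangle or are linked by a chain of triangles. Edges dropping different curves instead span rectangles (commuting moves) or pentagons.
\item Call these equivalence classes of edges at $v$ the "directions" at $v$. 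There are exactly $k$ of them, one per curve of $v$. This identifies each curve $c\in v$ with the set of edges at $v$ not moving $c$, which is the union of the other $k-1$ directions.
\item Define the set $W_c$ of all vertices containing a non-separating curve $c$. I would characterize $W_c$ as a maximal set closed under moves that avoid a marked direction, transporting the marking across rectangles. For $k=1$ this needs separate handling: there vertices are single curves, $\Gamma_1$ is essentially the graph of curves intersecting once, and automorphisms directly induce maps on curves.
\item Show that $\varphi\in\Aut(\Gamma_k(S))$ induces a bijection $c\mapsto \varphi_*(c)$ on non-separating curves. Show that disjointness is preserved: $a,b$ are disjoint iff $W_a\cap W_b\neq\emptyset$, using $k\ge2$ and infinite genus so that $\{a,b\}$ extends to a cut-system whenever $a\cup b$ is non-separating. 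Disjoint pairs with $a\cup b$ separating need a different criterion, for example via $\mathcal{N}$-adjacency through a third curve, or by using the Schmutz graph instead: there, "intersect once" is detected by an edge whose two distinguished curves differ, again from \cite{hernandez}.
\end{enumerate}

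This yields $\Phi(\varphi)\in\Aut(\mathcal{N}(S))$, and we check that $\Phi$ is a homomorphism.

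The last step is to show $\Phi$ is injective. If $\Phi(\varphi)=\mathrm{id}$, then each vertex, being the set of its curves, is fixed, so $\varphi=\mathrm{id}$. Combined with geometricity of $\Aut(\mathcal{N}(S))$ and the compatibility with $\map^*(S)$, this gives the isomorphism.

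The main obstacle is steps 2 and 3: intrinsically distinguishing, at a vertex, which curve an edge replaces. In particular, we must separate triangles, which occur inside one direction, from pentagons and rectangles, which mix directions, using only local combinatorics such as links of vertices and edge degrees in $2$-cells. I would first try to show that two edges at $v$ lie in a common triangle iff they replace the same curve, and lie in a common rectangle iff they replace different curves with disjoint new curves.
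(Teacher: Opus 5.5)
Your architecture matches the paper's (Irmak's): extract curves from edges of $\Gamma_k(S)$, obtain an automorphism of a curve graph, apply \cite{hernandez}, and get injectivity because a vertex is the set of its curves. The gap is in the step you flag as the main obstacle, and the difficulty is not where you place it.

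Separating triangles from rectangles and pentagons is free: they have different cycle lengths. Moreover, every $3$-cycle in $\Gamma^1_k(S)$ has all three vertices sharing $k-1$ curves, so two edges at $v$ in a common triangle drop the same curve. What needs proof is the converse. Let $R$ be $S$ cut along the curves of $v$ other than $a$. The edges at $v$ dropping $a$ correspond bijectively to curves $b\subset R$ with $i(a,b)=1$, and two such edges lie in a common triangle exactly when $i(b,b')=1$. So ``same dropped curve implies linked by a chain of triangles at $v$'' is precisely the connectivity of the graph $X_a$ of such curves, with edges for intersection number one. This is the analogue of Irmak's Lemma 3, and you neither state nor prove it. The version you propose to try first (same dropped curve iff common triangle) is false, since it would force $i(b,b')=1$ for every such pair.

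Similarly, in Step 4 the transport of directions only determines $\varphi_*(c)$ independently of the base vertex if all vertices containing $c$ are joined by moves not touching $c$. That means $\Gamma_{k-1}$ of $S$ cut along $c$ must be connected. This is where the paper invokes the connectedness half of Theorem~\ref{thm:simply}, and your proposal omits it.

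Second, your primary target $\mathcal{N}(S)$ is not reached. The criterion ``$a,b$ disjoint iff $W_a\cap W_b\neq\emptyset$'' fails whenever $a\cup b$ separates, and you give no substitute. In the paper, handling such pairs takes a separate chain-of-curves construction on an exhaustion of $S$, and it already uses that $\tilde f\in\Aut(\mathcal{G}(S))$.

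None of this is needed for the theorem, so commit to your fallback. Take an edge $v\leftrightarrow w$ with $v-w=\{a\}$ and $w-v=\{b\}$. Once the well-definedness above is established, $\tilde f(a)=f(v)-f(w)$ and $\tilde f(b)=f(w)-f(v)$, hence $i(\tilde f(a),\tilde f(b))=1$. With $\widetilde{f^{-1}}$ as inverse, $\tilde f\in\Aut(\mathcal{G}(S))$. Theorem~\ref{thm:her} together with your injectivity argument then completes the proof.
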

We prove this by constructing an action of $\text{Aut}(\Gamma_k(S))$ on the Schmutz graph, $ \mathcal{G}(S)$, as done by \cite{irmak2004}. The proof in \cite{irmak2004} is almost entirely transferable to the infinite-type case, but parts of it are reproduced here for the sake of completeness. As a direct corollary to Theorem \ref{thm:geometric} we have the following:
\begin{ncorollary}\label{cor:iso}
  Let $S,S'$ be surfaces of infinite-type with no planar ends and empty boundary. Let $\vp:\Gamma_k(S)\to \Gamma_k(S')$ be an isomorphism. Then $S$ is homeomorphic to $S'$.
\end{ncorollary}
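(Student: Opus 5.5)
The plan is to recover the surface from the complex by passing through the Schmutz graph, as in the proof of Theorem \ref{thm:geometric}. Recall that $\mathcal{G}(S)$ has as vertices the non-separating curves of $S$, with edges joining curves that meet exactly once. The argument for Theorem \ref{thm:geometric} builds a homomorphism $\text{Aut}(\Gamma_k(S))\to\text{Aut}(\mathcal{G}(S))$. It does this by characterizing, purely combinatorially inside $\Gamma_k(S)$, which configurations of vertices correspond to a single non-separating curve. It also characterizes when two such curves intersect exactly once. For $k\geq 2$, a candidate for the first characterization is a maximal family of cut-systems sharing a fixed curve, detected via triangles and edges. For the second, a candidate is a pair of cut-systems related by an elementary move whose exchanged curves meet once; by the definition of $\Gamma_k$, every edge is of this form.

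First, I will observe that this characterization is intrinsic to the complex. It uses only the incidence structure of vertices, edges, and 2-cells (triangles, squares, pentagons), never the ambient surface. Hence the same recipe applied to an isomorphism $\vp:\Gamma_k(S)\to\Gamma_k(S')$ produces a bijection $\vp_*:\mathcal{G}(S)^{(0)}\to\mathcal{G}(S')^{(0)}$ on non-separating curves. This bijection preserves and reflects the relation of intersecting once, so $\vp_*$ is a graph isomorphism $\mathcal{G}(S)\cong\mathcal{G}(S')$. Checking that each step of the argument for Theorem \ref{thm:geometric} is stated in this intrinsic, two-surface form is routine but necessary. The hypotheses (infinite type, no planar ends, empty boundary) hold for both $S$ and $S'$, so every lemma used there applies on both sides.

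Second, I will invoke the rigidity result for the Schmutz graph from \cite{hernandez}, in its form for isomorphisms between different surfaces. For infinite-type surfaces with no planar ends and finitely many boundary components, an isomorphism $\mathcal{G}(S)\to\mathcal{G}(S')$ is induced by a homeomorphism $S\to S'$. Alternatively, that paper notes that these graphs distinguish surfaces. Either statement immediately gives $S\cong S'$.

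The main obstacle I expect is the first step. The proof of Theorem \ref{thm:geometric} may be phrased only for self-automorphisms, and some step there might use a specific curve configuration on $S$. Such a step would have to be matched on $S'$ before we know the two surfaces are homeomorphic. I would handle this by verifying that every characterization used refers only to local combinatorics of $\Gamma_k$. Any existence statement used, such as the existence of a curve meeting a given one once or disjoint from a given cut-system, holds on every surface satisfying the hypotheses; in particular, infinite genus ensures enough room for $k$ curves plus extra disjoint handles. If \cite{hernandez} is only available for automorphisms, the fallback is its comparison statement that the Schmutz graph determines the surface.
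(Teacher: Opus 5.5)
Your argument is correct in outline, but it takes a different route from the paper. The paper stays at the level of groups. It applies Theorem \ref{thm:geometric} separately to $S$ and $S'$, so conjugation by $\vp$ gives an abstract isomorphism
\[
\map^*(S)\cong\Aut(\Gamma_k(S))\cong\Aut(\Gamma_k(S'))\cong\map^*(S').
\]
It then invokes the algebraic rigidity theorem of \cite{bavard}: every isomorphism between big mapping class groups is induced by a homeomorphism. This treats Theorem \ref{thm:geometric} purely as a black box, at the cost of a deep external theorem.

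You instead transport the construction of $\tilde f$ to two surfaces to get $\mathcal{G}(S)\cong\mathcal{G}(S')$ directly, and then use combinatorial rigidity of the Schmutz graph. This avoids the group-theoretic rigidity. If the version of \cite{hernandez} you cite says isomorphisms are induced by homeomorphisms, it even shows $\vp$ itself is geometric, since a vertex of $\Gamma_k$ is determined by its curves (as in the injectivity step of Theorem \ref{thm:geometric}). Be aware that Theorem \ref{thm:her}, as quoted, covers only automorphisms. The two-surface statement is what the introduction attributes to \cite{hernandez}. Your fallback of using only the automorphism version would collapse back into the paper's route via $\Aut(\mathcal{G})\cong\map^*$ and \cite{bavard}.

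Two corrections to your description of step one. First, the construction is not a combinatorial recognition of ``families of cut systems sharing a curve''. It is edge-based: for an edge $v\lr w$ with $v-w=\{a\}$, you set $\vp_*(a)$ to be the unique curve of $\vp(v)-\vp(w)$. The real work is Irmak's independence-of-choices lemma.

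Second, that lemma does use the ambient surface, but only the source one. There it needs connectedness of $X_a$, existence of auxiliary curves disjoint from a cut system, and connectivity of $a$-segments from the first half of Theorem \ref{thm:simply}. On the target it uses only facts valid in every $\Gamma_k$:
\begin{itemize}
\item a $3$-cycle consists of cut systems sharing $k-1$ curves;
\item opposite sides of a rectangle exchange the same pair of curves.
\end{itemize}
This asymmetry is exactly why the argument works for $S\neq S'$. Bijectivity then follows by running it for $\vp^{-1}$ too, with the analogue of Lemma 7 of \cite{irmak2004}.
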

The Hatcher--Thurston complex is an important tool in the study of mapping class group of finite-type surfaces. By studying the stabilizers of vertices, edges, and cells in $ \mathcal{HT}(S)$ of the action of $\map(S)$ Wajnryb gave a simple presentation of $\map(S)$ in \cite{wajnryb2}, which was inspired by a similar approach to the problem by Harer in \cite{harer}. Harer in the same paper uses a simpler version of the Hatcher--Thurston complex to compute the second cohomology of the mapping class group. These methods are not directly transferable to big mapping class group as they are not finitely generated, in fact they are not even countably generated. In \cite{patel}, the authors give topological generators for the pure mapping class group $\text{PMap}(S)$, the subgroup of $\map(S)$ which are end-preserving, in terms of Dehn Twists and handle shift maps. For infinite-type surfaces with countably many non-planar ends it may be possible to use the results presented in this paper to determine relations in $\text{PMap}(S)$ between Dehn Twists and handle shift maps by applying the techniques of Wajnryb and Harer.
\vspace{1em}
\newline
\textbf{Outline.} In Section \ref{pre} the main objects of this paper are defined and conventions and notations which are used throughout the paper are discussed. Section \ref{con} is dedicated to the proof of Theorem \ref{thm:simply} and Section \ref{aut} to the proof of Theorem \ref{thm:geometric} and Corollary \ref{cor:iso}.
\vspace{1em}
\newline
\textbf{Acknowledgements.} The author would like to thank Sumanta Das for posing the question which led to this paper and his comments for improving the quality of the paper. The author also extends gratitude to Dan Margalit and Tyrone Ghaswala, whose comments helped improve the paper.

\section{Preliminaries}\label{pre}
\textbf{Ends of a Surface.} Let $S$ be a surface of infinite-type and let $\{K_i\}_{i\in \N}$ be a strictly increasing chain of compact sets ordered by inclusion which covers $S$. Then an \textit{end}, $e$, of the surface $S$ is a strictly decreasing chain $\{U_i\}_{i\in \N}$, where each $U_i$ is a connected component of $S-K_i$. Let $\End(S)$ denote the set of all ends of $S$. Given an open set $U\subset S$ with compact boundary define:
$$U^* = \{e = \{U_i\}_{i\in\N}\in \End(S)\ |\ U_i\subset U\ \text{for large enough}\ i\in\N\}$$
Endow $\End(S)$ with the topology generated by $\{U^*\ |\ U\ \text{open subset of}\ S\ \text{with compact boundary}\}$. For any surface, $S$, the space of ends is homeomorphic to a closed subset of the Cantor set, \cite{ahlfors}. An end, $e=\{U_i\}_{i\in \N}$, is said to be \textit{planar} if for some $i\in \N$, $U_i$ can be embedded in the plane. Otherwise, the end is said to be \textit{non-planar} or \textit{accumulated by genus}. Denote the subspace of non-planar ends to be $\End_{\infty}(S)$. The classification theorem for infinite-type surfaces states that $S, S'$ are homeomorphic if they have the same genus and number of boundary components and there is a homeomorphism $\End(S)\to \End(S')$ which restricts to a homeomorphisms of the subspace of non-planar ends, $\End_{\infty}(S)\to \End_\infty(S)$, \cite{Richards}. Throughout this paper, the surface $S$ is assumed to have at least one non-planar end.
\vspace{1em}
\newline
\textbf{Curves and Arcs.} By a curve $\a$ in the surface $S$ we mean a simple closed curve, that is, an embedding of $S^1$ in $S$, which is not null-homotopic and is not homotopic to a boundary component of $S$. We often consider isotopy classes of curves, which we will denote by the latin symbol corresponding to the greek symbol of a representative curve, i.e. $a = [\a],\ b= [\b],\ c=[\g],$ etc. The \textit{geometric intersection number}, $i(a,b)$, of a pair of isotopy classes of curves $a,b$ is defined as the infimum of the set $\{|\a\cap \b|\ |\ \a\in a,\ \b\in b\}$. Curves $\a$ and $\b$ are said to be in \textit{minimal position} if $|\a\cap \b| = i(a,b)$. Just like in the finite-type case, the curves $\a, \b$ are in minimal position if and only if they do not form any bigons, \cite{farb}. Given two isotopy classes of curves $a,b$ there are always representatives $\a,\b$, respectively, which are in minimal position. We often abuse notation and call both $\a$ and $a$ as curves. The isotopy class, $a$, of a curve, $\a$, is said to be \textit{separating} if $S-\a$ is not connected, otherwise $a$ is said to be \textit{non-separating}. Note that if $i(a,b) = 1$ then both $a,b$ must be non-separating.\\

Let $I = [0,1]$. An arc $\tau$ on $S$ is an embedding of $\tau:I\to S$ so that $\tau(\partial I) \subset \partial S$. An arc is said to be essential if it is not isotopic to a boundary or point in $S$, otherwise it is said to be non-essential. Just like curves, we think of arcs in terms of their isotopy classes. The definitions of separating, non-separating, etc can be extended to arcs.
\vspace{1em}
\newline
\textbf{Mapping class group.} Let $\text{Homeo}(S)$ be the group of self-homeomorphisms of any surface $S$ then the extended mapping class group $\map^*(S)$ is defined as $\pi_0(\text{Homeo}(S))$, and elements of $\map^*(S)$ are called mapping classes and are denoted $[F]$ where $F\in \text{Homeo}(S)$ is a representative homeomorphism. Endowing $\text{Homeo}(S)$ with the compact-open topology induces a topology on $\map^*(S)$. When $S$ is finite-type this is just the discrete topology. This topology is non-trivial when $S$ is of infinite-type. Given a mapping class $[F]$ it induces a map $F_*$ on the collection of isotopy classes of curves: $F_*(a) = F(a)$. It is easily checked that this is well defined. Dehn twist $T_a$ about a curve $a$ is a mapping class which is represented by a homeomorphism with support inside an annular neighbourhood $A$ containing $\a$ given by $(\theta, t)\mapsto (\theta+2\pi t, t)$.
\vspace{1em}
\newline
\textbf{Curve Complexes and Graphs.} For any surface $S$ we define the curve complex $ \mathcal{C}(S)$ as the simplicial complex with vertices being the collection of isotopy classes of curves and the vertices $a_1,\cdots, a_n$ in $ \mathcal{C}(S)$ form an $n-$simplex if $i(a_i,a_j)=0$. The non-separating curve complex, $ \mathcal{N}(S)$, is the subcomplex of $ \mathcal{C}(S)$ with vertex set consisting of only of non-separating curves. The Schmutz graph, $ \mathcal{G}(S)$, is the graph with the same vertex set as $ \mathcal{N}(S)$, but two vertices have an edge between them if $i(a,b)=1$. It is a well-known fact that $ \mathcal{C}(S), \mathcal{N}(S),$ and $ \mathcal{G}(S)$ are connected for any surface, $S$. In the case when $S$ is of infinite-type all of these complexes have finite diameter, \cite{hernandez}. The action of $\map^*(S)$ on curves naturally induces an action on these curve complexes via automorphisms.
\begin{theorem}[Theorem 1, \cite{hernandez}]\label{thm:her}
 Let $S$ be an infinite-type surface with possible compact boundary and no planar ends, that is, $\End(S) = \End_\infty(S)$. Then the action of $\map^*(S)$ on $\Gamma$ induces an isomorphism to $\Aut(\Gamma)$, where $\Gamma$ is one of $ \mathcal{C}(S),\ \mathcal{N}(S),\ \mathcal{G}(S)$.
\end{theorem}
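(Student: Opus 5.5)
The plan is to reduce all three cases to the case $\Gamma = \mathcal{C}(S)$ and then prove that case by exhausting $S$ by finite-type subsurfaces, where the theorems of \cites{ivanov, luo1999, korkmaz} apply.

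\textbf{Reducing $\mathcal{G}(S)$ and $\mathcal{N}(S)$ to $\mathcal{C}(S)$.} First I would show that every automorphism of $\mathcal{G}(S)$ preserves disjointness, and so induces an automorphism of $\mathcal{N}(S)$. The aim is a purely graph-theoretic description of when two non-separating curves $a,b$ with $i(a,b)\neq 1$ are disjoint. I would try a condition on common neighbours in $\mathcal{G}(S)$: for instance, $a,b$ are disjoint and non-isotopic if and only if there is a curve $c$ with $i(a,c)=i(b,c)=1$, together with further configurations of the kind used by Schmutz Schaller in finite type. Because such a configuration involves only finitely many curves, it lives in a finite-type subsurface of large genus, and the finite-type argument carries over. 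Next I would extend automorphisms of $\mathcal{N}(S)$ to $\mathcal{C}(S)$. A separating curve $s$ is determined by the set of non-separating curves disjoint from it. Because there are no planar ends, both sides of $s$ have positive genus, so this set is a join of two nonempty pieces. One can therefore characterize separating curves as maximal "join-splittings" of links in $\mathcal{N}(S)$, and this defines the extension.

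\textbf{The case of $\mathcal{C}(S)$.} Let $\varphi\in\Aut(\mathcal{C}(S))$. Using links, I would first show that $\varphi$ preserves topological types. A curve $a$ is non-separating exactly when its link is not a join. A separating curve has link equal to the join of the curve complexes of its two sides, and each side has a finite or infinite genus that can be read off combinatorially. For example, finite genus $h$ on a side is detected by the maximal size of certain simplices of non-separating curves in that side. Then I would fix a principal exhaustion $S_1\subset S_2\subset\cdots$ by finite-type subsurfaces whose boundary curves are separating, with every complementary component of infinite type. I would show that $\varphi$ sends the curve complex of each $S_n$, viewed as the subcomplex of curves disjoint from $\partial S_n$ that lie on the $S_n$ side, isomorphically onto the corresponding subcomplex for a subsurface $S_n'$ homeomorphic to $S_n$. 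The finite-type theorem then gives a homeomorphism $f_n\colon S_n\to S_n'$ inducing $\varphi$ there, and $f_n$ is unique up to isotopy, since no exceptional low-complexity surfaces occur when the genus is large.

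\textbf{Gluing the pieces (main obstacle).} By uniqueness, $f_{n+1}|_{S_n}$ is isotopic to $f_n$. After isotoping inductively we get a compatible sequence whose union is a homeomorphism $f$ of $S$. It is surjective because $\varphi^{-1}$ yields the inverse maps, and $f$ induces $\varphi$. Injectivity of $\map^*(S)\to\Aut(\Gamma)$ follows from the Alexander method for infinite-type surfaces: a mapping class fixing every curve is trivial. I expect the main obstacle to be the middle step. There one must show that $\varphi$ carries the boundary curves of $S_n$ to curves bounding a homeomorphic finite-type subsurface, and this depends on detecting genus and the number of boundary components combinatorially. This is where the hypothesis of no planar ends is essential: it guarantees every separating curve has genus on both sides and rules out punctures the complex cannot detect.
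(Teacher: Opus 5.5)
The paper gives no proof of this statement: it is quoted from \cite{hernandez} and used as a black box, and Section~\ref{aut} needs it only for surfaces with empty boundary. Judged on its own, your outline follows the standard strategy of exhaustion plus finite-type rigidity. However, it breaks on the boundary, which the statement explicitly allows.

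Suppose $S$ has at least two boundary components. Then a curve $t$ cobounding a pair of pants with two of them is an essential separating curve whose one side has genus zero and contains no curves at all. Several of your steps fail for such $t$:
\begin{itemize}
\item Your claim that having no planar ends forces genus on both sides of every separating curve is false.
\item The link of $t$ in $\mathcal{C}(S)$ is just the curve complex of the other side, which is not a join. So your test ``$a$ is non-separating iff its link is not a join'' wrongly classifies $t$ as non-separating.
\item The link of $t$ in $\mathcal{N}(S)$ is also not a join. So your extension of automorphisms from $\mathcal{N}(S)$ to $\mathcal{C}(S)$ via join-splittings never reaches $t$.
\end{itemize}
You need a separate combinatorial recognition of these outer curves, as in the finite-type proofs for punctured surfaces. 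Alternatively, restrict to at most one boundary component, where your claims do hold.

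Three further steps are not yet justified.
\begin{itemize}
\item Your sample criterion for disjointness in $\mathcal{G}(S)$ is false. Take $a,b$ with $i(a,b)=2$ inside an embedded four-holed sphere whose complement in $S$ is connected, so both are non-separating. An arc meeting each of $a,b$ once, closed up outside the sphere, gives $c$ with $i(a,c)=i(b,c)=1$. So the reduction from $\mathcal{G}(S)$ to $\mathcal{N}(S)$ rests entirely on a finite-type characterization you have not specified.
\item Your transfer of that characterization to $S$ is valid only under two conditions. The characterization must be existential. The finite-type subsurface must have all its boundary curves separating in $S$, so that non-separation and intersection numbers agree there and in $S$.
\item In the gluing, you must check that $f_n$ sends each boundary curve $s$ of $S_n$ to $\varphi(s)$, not to another component of $\partial S'_n$. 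You must also choose representatives so that the $S'_n$ actually exhaust $S$. Otherwise $\bigcup_n f_n$ is only an embedding, and your remark that $\varphi^{-1}$ yields the inverse does not by itself fix this.
\end{itemize}
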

Furthermore, it was shown in the same paper that $\Aut( \mathcal{C}(S))$ is rigid, that is, every isomorphism $\Aut( \mathcal{C}(S)) \to \Aut( \mathcal{C}(S'))$ is induced by a homeomorphism $S\to S'$.
\vspace{1em}
\newline
\textbf{Cut Systems and Hatcher--Thurston Complex.} A cut system on a surface $S$ is a disjoint collection of isotopy classes of non-separating curves, $(a_0,\cdots, a_k)$, so that $S-\bigcup \a_i$ is connected. Now, we recall the definition of Hatcher--Thurston complex on finite-type surfaces. Let $\Sigma_g$ be a finite-type surface with genus $g$ with possible compact boundary. Let $ \mathcal{HT}^0(\Sigma_g)$ be the collection of maximal elements of the poset of cut systems on $\Sigma_g$ ordered by inclusion. Every cut-system in $ \mathcal{HT}^0(\Sigma_g)$ has exactly $g$ curves. Cut systems $v,w\in \mathcal{HT}^0(\Sigma_g)$ are said to be related by an elementary move if $v-w = \{a\}$, $w-v = \{b\}$, and $i(a,b) = 1$, we denote this as $v \leftrightarrow w$. Attach a $1-$cell between two points $ v,w\in \mathcal{HT}^0(\Sigma_g)$ whenever they are related by an elementary move to obtain the graph, $ \mathcal{HT}^1(\Sigma_g)$. There are three types of simple cycles we consider:
\begin{enumerate}
  \item \textbf{Triangles:} Consider the vertices $v = (b_0, a_1, \cdots, a_{g-1})$, $w=(b_1, a_1,\cdots, a_{g-1})$, and $u=(b_2, a_1,\cdots, a_{g-1})$ so that  $i(b_i,b_j) = 1$. Then the cycle, $v \lr w \lr u \lr v$, is called a triangle. 
  \item \textbf{Rectangles:} Let $b_0,\ b_1,\ c_0,$ and $c_1$ be non-separating curves in $\Sigma_g$ with $i(b_0,b_1) = i(c_0,c_1) = 1$ and all other pairs being disjoint. If $v_{ij} = (b_i, c_j, a_2,\cdots, a_{g-1})\in \mathcal{HT}^1(\Sigma_g)$ then the cycle, $v_{00}\lr v_{01}\lr v_{11} \lr v_{10} \lr v_{00}$, is called a rectangle.
  \item \textbf{Pentagons:} Let $b_0,\cdots, b_4$ be non-separating curves in $\Sigma_g$ so that $i(b_i,b_{i+1}) = 1$, where the indices are modulo $5$. The vertices $v_i = (b_i, b_{i+2}, a_2,\cdots, a_{g-1})$ form a cycle, $v_0\lr v_2\lr v_4\lr v_1 \lr v_3\lr v_0$, called a pentagon.
\end{enumerate}
Attach a 2-cell along every cycle of type 1,2, and 3 in $ \mathcal{HT}^1(\Sigma_g)$ to construct the Hatcher--Thurston complex, $ \mathcal{HT}(\Sigma_g)$.
\begin{theorem}[Theorem 1.1, \cite{hatcher}]
  $ \mathcal{HT}(\Sigma_g)$ is connected and simply connected.
\end{theorem}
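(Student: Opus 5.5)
This is the classical Hatcher--Thurston theorem, and I would follow Wajnryb's combinatorial argument \cite{wajnryb} rather than the Morse-theoretic one of \cite{hatcher}. The structure is a double induction: first on the genus $g$ of $\Sigma_g$ (with the number of boundary components carried along), and inside the simple connectivity step, on a ``radius'' of closed paths measured from a fixed curve. Fix a non-separating curve $a$. For a vertex $v$, set $d(v) = \min_{c\in v} i(a,c)$. For a path, its radius is the maximum of $d$ over its vertices. The base cases are $g=0$, where the complex is a point, and $g=1$. For $g=1$, vertices are single non-separating curves, edges join curves meeting once, and triangles are filled. The complex is then the Farey graph with its triangles filled, which is a contractible flag complex.

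\textbf{Connectivity.} Given a vertex $v$, I would show it can be joined to a vertex containing $a$ by decreasing $d$. If $d(v)=0$, then $a$ is disjoint from some $c\in v$ or belongs to $v$. If $a\in v$ we are done. Otherwise I cut along $c$. The surface $\Sigma_g - c$ has genus $g-1$, and $v-\{c\}$ together with the image of $a$ (when $a$ stays non-separating there) lies in it, so induction on genus gives a path. If the cut is bad, I first do one elementary move to replace $c$ by a curve $c'$ meeting $c$ once and disjoint from the others. If $d(v)>0$, a standard surgery of $c$ along an arc of $a$ gives a curve $c'$ with $i(c,c')=1$, still completing $v-\{c\}$ to a cut system, and with $i(a,c')<i(a,c)$. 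This is the usual ``one of the two surgeries stays non-separating in the complement'' argument. Iterating reaches $d=0$.

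\textbf{Simple connectivity.} This has two steps.

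Step one: any closed path all of whose vertices contain $a$ is null-homotopic. Deleting $a$ identifies the sub-complex of vertices containing $a$ with $\mathcal{HT}(\Sigma_g - a)$, which has genus $g-1$. Its triangles, rectangles and pentagons are exactly those of $\mathcal{HT}(\Sigma_g)$ that contain $a$, so induction applies. More generally, paths of radius $0$ are split into segments in which a fixed curve disjoint from $a$ is shared. Each segment is pushed to vertices containing $a$ using the cell relations: rectangles when the moved curves are disjoint from the shared curves, pentagons and triangles otherwise.

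Step two: a closed path of radius $r>0$ is homotoped to one of smaller radius. For each vertex $v$ with $d(v)=r$, choose a surgery replacement $v'$ with $v\lr v'$ and $d(v')<r$. The key claim is this: for consecutive vertices $u\lr v\lr w$ on the path, the loop $u\to v\to w$ can be replaced, modulo 2-cells, by a route through vertices of smaller $d$. The proof is a case analysis according to how the moved curves intersect $a$. Each local square or pentagon is filled by a rectangle or pentagon, or is reduced by the genus induction after cutting along a common curve.

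\textbf{Main obstacle.} I expect step two to be the hardest. It requires choosing the surgery arcs coherently, so that adjacent replacements $v'$ and $w'$ are themselves connected by a short path that closes up with cells. This is exactly where pentagons are needed: one of them arises when the two curves exchanged at consecutive moves both meet the surgery arc. I would handle it by choosing the surgery arc of $a$ to be an outermost subarc relative to all curves of $u,v,w$ simultaneously, and then enumerating the finitely many configurations, as Wajnryb does.
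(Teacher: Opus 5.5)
Your outline follows the Wajnryb framework that Section~\ref{con} also adapts (the paper itself only quotes this statement from Hatcher--Thurston). Measured against that framework, your radius-zero step has a genuine gap. You push each segment, with shared curve $c$ satisfying $i(a,c)=0$, onto vertices containing $a$. This is impossible whenever $\alpha\cup\gamma$ separates $\Sigma_g$, for example when $c$ is homologous to $a$, or when $a$ and $c$ cobound a pair of pants with a boundary circle. Such $c$ do occur in radius-zero paths. In that case no cut system contains both $a$ and $c$, so a $c$-segment cannot be moved into the vertices containing $a$ while keeping $c$, and induction on $\Sigma_g-\gamma$ gives nothing. The same configuration breaks your connectivity fix: every $c'$ with $i(c,c')=1$ then has algebraic intersection $\pm1$ with $a$, so it meets $a$.

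What is actually needed is Wajnryb's induction on the \emph{number} of segments, as in Lemma~\ref{lem:radius-zero}. For consecutive shared curves $c_1,c_2$ one uses rectangles if $i(c_1,c_2)=1$, and a common vertex if $\alpha\cup\gamma_1\cup\gamma_2$ is non-separating. If only the triple union separates, one uses the hexagon relation (Proposition~\ref{prop:hex-path}, assembled from a pentagon, rectangles and triangles). If $\alpha\cup\gamma_2$ separates, a further four-segment argument is required.

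Second, the radius-reduction step, which is the heart of the finite-type argument, is only deferred to Wajnryb. The paper never needs it for $\Gamma_k(S)$, because it can take a curve in a non-planar end disjoint from the whole path; $\Sigma_g$ does not supply such a curve. Even the ingredient you state is unjustified. Surgery of $c$ along an arc of $a$ yields a curve meeting $c$ once only if the arc leaves and returns to $c$ from opposite sides. If it returns to the same side, the surgered curve is disjoint from $c$, so the new vertex is not adjacent to $v$. An outermost arc may also join two different curves of the cut system, which gives no surgery of $c$ at all. So the existence of $v'\leftrightarrow v$ with $d(v')<d(v)$ already needs proof, before the coherence issue you flag.

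Finally, your genus-one base case is wrong for the surfaces the induction produces. Cutting adds boundary, and in $\Sigma_{1,b}$ with $b\ge 2$ distinct non-separating curves can be disjoint (two meridians cobounding a pair of pants with a boundary circle). So the complex is not the Farey complex. You could start at $g=0$ instead: in genus one, two non-separating curves disjoint from $a$ have the same slope after capping, so radius-zero loops are constant. But then genus one also rests entirely on the missing step two.
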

Consider now, an infinite-type surface $S$ with a non-planar end. There are two obvious issues with blindly generalizing the Hatcher--Thurston complex for $S$. Firstly, not all chains of cut systems have a maximal element when ordered by inclusion, for instance there exists a countable disjoint collection of non-separating curves in the Loch Ness monster such that the union of all the curves is separating but no finite subcollection is separating. Secondly, two cut systems with infinitely many distinct curves will not be connected by finitely many elementary moves. Thus we define the following complex by limiting cut systems to finitely many curves.
\begin{definition}
  Let $S$ be an infinite-type surface with non-planar ends, $k\in \N$, and let $\Gamma^0_k(S)$ be the collection of cut systems on $S$ with exactly $k$ curves. Let $\Gamma^1_k(S)$ be the graph obtained by attaching a 1-cell between two cut systems in $\Gamma^0_k(S)$ whenever they are related by an elementary move. The generalized Hatcher--Thurston complex, $\Gamma_k(S)$, is obtained by attaching 2-cells to every cycle of type $1,\ 2,$ or $3$.
\end{definition}
Note that $\Gamma^1_1(S)$ is just the Schmutz graph $ \mathcal{G}(S)$. Since the Schmutz graph is connected and $\text{diam}( \mathcal{G}(S))\leq 4$ we conclude that $\Gamma^1_1(S)$ is connected with finite diameter. This will be useful in the proof of connectedness of $\Gamma_k(S)$.

\section{Connectedness and Simple Connectedness of $\Gamma_k(S)$}\label{con}
In this section we prove Theorem \ref{thm:simply}. 
\begin{lemma}\label{lem:common}
  Let $v,w$ be cut systems in $\Gamma_k(S)$ so that $v$ and $w$ have all but one curve in common. Then there is a path of length at most $4$ from $v$ to $w$ in $\Gamma^1_1(S)$.
\end{lemma}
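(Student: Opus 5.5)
The plan is to reduce to the Schmutz graph of the surface obtained by cutting along the common curves. I read the conclusion as a path of length at most $4$ in $\Gamma^1_k(S)$ that keeps the common curves fixed, equivalently a path in $\Gamma^1_1$ of the cut surface.

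\textbf{Reduction.} Write $v=(a,c_1,\dots,c_{k-1})$ and $w=(b,c_1,\dots,c_{k-1})$. Let $S'$ be $S$ cut along $\g_1,\dots,\g_{k-1}$. Since $v$ is a cut system, $S'$ is connected. It has compact boundary, since we added $2(k-1)$ boundary circles. Removing a compact set does not change the ends, so $S'$ is still infinite-type and has a non-planar end. Because $v$ and $w$ are cut systems, $a$ and $b$ are non-separating in $S'$.

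Conversely, let $d$ be a non-separating curve in $S'$. Then $(d,c_1,\dots,c_{k-1})$ is a cut system in $S$. Two such cut systems differing in curves $d,d'$ with $i(d,d')=1$ are related by an elementary move. So a path $a=d_0,d_1,\dots,d_m=b$ in $\mathcal{G}(S')=\Gamma^1_1(S')$ gives a path of length $m$ from $v$ to $w$ in $\Gamma^1_k(S)$. It therefore suffices to show that $\mathcal{G}(S')$ has diameter at most $4$ between $a$ and $b$. We may assume $a\neq b$.

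\textbf{Approach.} One could try to cite the bound $\text{diam}(\mathcal{G}(S'))\le 4$ quoted from \cite{hernandez} in Section~\ref{pre}. I would rather prove the needed bound directly, because that bound is stated there for surfaces with no planar ends, whereas $S'$ may have planar ends.

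\textbf{Step 1: a handle away from $a$ and $b$.} Choose a compact subsurface $K\subset S'$ containing $\a\cup\b$. Because $S'$ has a non-planar end, the complement $S'-K$ has positive genus. Pick a non-separating curve $\xi$ in $S'-K$, with class $x$, which is also non-separating in $S'$. Then $i(a,x)=i(b,x)=0$.

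\textbf{Step 2: a curve meeting both $a$ and $x$ once.} Since $a$ and $x$ are disjoint non-separating curves, I will choose them (using the freedom in Step 1) so that $S'-(\a\cup\xi)$ is connected. Two disjoint non-separating curves jointly separate only if they bound a subsurface together; the freedom in the choice of $\xi$ should avoid this, for example by taking $\xi$ inside a handle, that is, a one-holed torus, in $S'-K$.

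With this choice, join the two sides of $\a$ to the two sides of $\xi$ by two disjoint arcs $\g_1$ and $\g_2$ in $S'-(\a\cup\xi)$. Closing them up across $\a$ and $\xi$ gives a curve $\g$ with $i(a,c)=1=i(c,x)$. Such a curve is automatically non-separating. The same construction gives $\delta$ with $i(x,d)=1=i(d,b)$. This yields the path
$$a\lr c\lr x\lr d\lr b$$
in $\mathcal{G}(S')$, of length $4$.

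\textbf{Main obstacle.} The delicate point is Step 2. It requires that the complement of $\a\cup\xi$, and likewise of $\b\cup\xi$, is connected, and that the two arcs can be chosen disjoint with the correct side-pairing. The resulting intersection numbers must then be exactly $1$, not merely the count in this particular representative. I would handle the connectivity by choosing $\xi$ as a meridian of a handle far from $K$: $S'-K$ has a component $U$ of infinite genus adjacent to $K$, and $\xi\subset U$ can be chosen so that $U-\xi$ stays connected and meets $\partial K$. Minimality of the intersection numbers is then automatic, since two curves meeting transversely exactly once cannot form a bigon.
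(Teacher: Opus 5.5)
Your proof is correct, and its skeleton is the paper's. You cut $S$ along the common curves $c_1,\dots,c_{k-1}$, find a short path from $a$ to $b$ in the Schmutz graph of the cut surface ($S'$ in your notation, $R$ in the paper's), and lift it to $\Gamma^1_k(S)$ by re-adding the $c_i$ to every vertex. You also correctly read the $\Gamma^1_1(S)$ in the statement as $\Gamma^1_k(S)$.

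The only difference is how the distance bound in the cut surface is obtained. The paper simply invokes $\mathrm{diam}\,\mathcal{G}(R)\le 4$. You prove $d_{\mathcal{G}(S')}(a,b)\le 4$ directly: you take a meridian $x$ of a handle $T\subset S'-K$ and build the path $a\leftrightarrow c\leftrightarrow x\leftrightarrow d\leftrightarrow b$.

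Your route is self-contained and shows exactly where the non-planar end enters, namely the existence of $T$ away from $\alpha\cup\beta$. The step you flag as delicate does follow from your handle choice. Since $S'-\alpha$ is connected and $T$ is attached along the single circle $\partial T$, the piece $S'-\alpha-\mathrm{int}\,T$ is connected. Also $T-\xi$ is a connected pair of pants containing $\partial T$, so $S'-(\alpha\cup\xi)$ is connected, and likewise for $\beta$. The arc construction and the once-transverse intersections then give $i=1$ with no bigon issues, as you say.

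The paper's route is shorter, but it leans on the cited bound applying to $R$, which has compact boundary and possibly planar ends. Your worry about the citation is slightly overstated, though. The paper's preliminaries state $\mathrm{diam}\,\mathcal{G}\le 4$ in its standing setting (at least one non-planar end), not only for surfaces without planar ends. It is Theorem \ref{thm:her}, not the diameter bound, that carries the no-planar-ends hypothesis. In effect, your argument proves that cited fact in exactly the generality the lemma needs.
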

\begin{proof}
  Let $v-w = \{a\}$ and $w-v=\{b\}$ and let $R$ be the surface obtained by cutting $S$ along the representatives of all the common curves, $c_1,\cdots, c_{k-1}$, of $v$ and $w$. There is a path $a\lr d_1\lr\cdots\lr d_{n}\lr b$ in $\Gamma^1_1(R) = \mathcal{G}(R)$ with $n\leq 3$. Since $d_i$ are non-separating curves in $R$ it follows that $v_i = (d_i, c_1,\cdots, c_{k-1})$ is a cut system. Thus, $v\lr v_1\lr\cdots \lr v_n\lr w$ is the desired path.
\end{proof}
\begin{proof}[Proof of first half of Theorem \ref{thm:simply}]
  Let $v,w\in \Gamma^0_k(S)$. We use induction on the number of curves $\ell = |v-v\cap w|$. Lemma \ref{lem:common} shows that for the base case $\ell=1$ there is a path from $v$ to $w$. Suppose now that $v-w = \{a_0,\cdots, a_{\ell-1}\}$, $w-v=\{b_0,\cdots, b_{\ell-1}\}$, and $v\cap w = \{c_0,\cdots, c_{k-\ell-1}\}$. Let $K\subset S$ be a compact sub-surface containing all the curves in $v$ and $w$ and let $d$ be a non-separating curve in some component of $S-K$. Then, using induction, there is a path from $v' = (d,a_1,\cdots,a_{\ell-1},c_0,\cdots,c_{k-\ell-1})$ to $w'=(d,b_1,\cdots,b_{\ell-1},c_0,\cdots,c_{k-\ell-1})$. Using Lemma \ref{lem:common}, there is a path from $v$ to $v'$ and $w$ to $w'$.
\begin{figure}[h]
\[\begin{tikzcd}
	v && w \\
	\\
	{v'} && {w'} & {}
	\arrow[squiggly, tail reversed, from=1-1, to=3-1]
	\arrow[squiggly, tail reversed, from=1-3, to=3-3]
	\arrow[squiggly, tail reversed, from=3-1, to=3-3]
\end{tikzcd}\]
\caption{Constructing the path from $v$ to $w$.}
\end{figure}
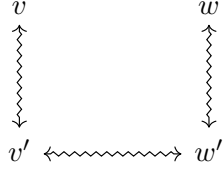
Using the construction above and the fact that the length of the path in Lemma \ref{lem:common} is at most $4$, it follows that the diameter of $\Gamma^1_k(S)$ is at most $8k-4$.
\end{proof}
If $v,w$ are two cut systems in $\Gamma_k(S)$ so that all of the curves in $v\cup w$ are distinct then any path in $\Gamma^1_k(S)$ between $v$ and $w$ has length at least $k$. This means that the diameter of $\Gamma^1_k(S)$ has a lower bound $k$.\\

In order to prove that $\Gamma_k(S)$ is simply-connected we define the following notion of radius of a path, and segments.
\begin{definition}
  The radius of a path $p = v_0 \lr \cdots \lr v_n$ in $\Gamma^1_k(S)$ with respect to a non-separating curve, $a\in v_j$ for some $j$, is defined as $\max_{v_i}\l(\min_{b\in v_i}i(a,b)\r)$. If $a_0,\cdots, a_n$ are non-separating curves in $S$ then $p$ is a $a_0,\cdots,\ a_n-$segment if $a_0,\cdots,a_n\in v_i$ for each vertex $v_i$ in the path $p$.
\end{definition}
First we show some propositions and lemmas in order to prove that $\Gamma_1(S)$ is simply-connected. The first one we will require is a special case of a well-known result about Dehn twists and intersection numbers.
\begin{proposition}[Proposition 3.4, \cite{farb}]\label{prop:dehn-int}
  Let $a,\ b,\ c$ be arbitrary isotopy classes of curves in $S$ and let $T_a$ be the Dehn twist about $a$. Then for any $k\in \Z$,
  $$\bigg| i(T_a^{k}(b), c) - |k| i(a,b)i(a,c) \bigg| \leq i(b,c)$$
$\square$
\end{proposition}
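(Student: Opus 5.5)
The plan is to reduce the statement to the finite-type case, where it is exactly \cite[Proposition 3.4]{farb}, by working inside a compact subsurface that carries all the relevant curves. Choose representatives $\a,\b,\g$ of $a,b,c$ that are pairwise in minimal position. Let $K\subset S$ be a compact subsurface containing $\a\cup\b\cup\g$ together with an annular neighbourhood $A$ of $\a$ that supports $T_a$. Enlarge $K$ if necessary so that no component of $\overline{S-K}$ is a disk, and no component is an annulus with one boundary on $\partial K$ and the other on $\partial S$. This is possible because the union of the curves is compact: glue any disk components back into $K$, and absorb such annuli into $K$. Then $K$ is a finite-type surface, and the twist $T_a^k$ of $S$ restricts to the twist $T_a^k$ of $K$.

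The key step, and the one I expect to need the most care, is to check that geometric intersection numbers computed in $K$ agree with those computed in $S$ for curves contained in $K$. The inequality $i_S\leq i_K$ is immediate, since an isotopy in $K$ is an isotopy in $S$. For the reverse inequality I will use the bigon criterion, which the preliminaries state holds on $S$ just as in the finite-type case. Put two curves in minimal position in $K$, so that they form no bigon in $K$. Suppose they formed a bigon in $S$, namely an embedded disk $D\subset S$ bounded by one arc of each curve. Then $\partial D\subset K$. Any component $C$ of $\overline{S-K}$ meeting the interior of $D$ must lie entirely in $D$, because $C$ is connected and does not meet $\partial D$. It follows that $C$ is a compact planar piece whose boundary circles are curves of $\partial K$ inside a disk. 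Since $C$ is contained in the disk $D$, its outer boundary circle bounds a disk in $D$, and a subsurface of a disk attached along a single circle of $\partial K$ would be a disk component, which was excluded. A piece with several boundary circles inside $D$ is impossible for the same reason, as the innermost circles would bound disk components. Hence $D\subset K$, contradicting minimal position in $K$, and so $i_S=i_K$. Curves of $K$ that are essential in $S$ stay essential and non-peripheral in $K$ by the choice of $K$.

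With this in hand, the inequality in $S$ is the inequality in the finite-type surface $K$, because all three quantities $i(T_a^k(b),c)$, $i(a,b)i(a,c)$ and $i(b,c)$ agree in $K$ and $S$. For completeness I would recall the finite-type argument. A representative of $T_a^k(b)$ is obtained by surgering $\b$ at each of its $i(a,b)$ points of intersection with $\a$, inserting $|k|$ parallel copies of $\a$. Counting intersections of this representative with $\g$ gives at most $|k|i(a,b)i(a,c)+i(b,c)$, which is the upper bound. For the lower bound, any bigon between this representative and $\g$ must use an arc coming from $\b$, because $\a,\g$ and $\b,\g$ have no bigons. One then argues that removing bigons deletes at most $2$ points per point of $\b\cap\g$ in total, which is at most $i(b,c)$ after the standard accounting in \cite{farb}. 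This yields $i(T_a^k(b),c)\geq |k|i(a,b)i(a,c)-i(b,c)$.
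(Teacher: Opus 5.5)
The paper gives no proof of this statement. It quotes the single-twist case of Farb--Margalit's inequality and applies it to the infinite-type surface $S$ without comment, implicitly assuming the finite-type result transfers. Its preliminaries do assert that the bigon criterion, which is what the Farb--Margalit argument uses, holds on $S$.

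Your proposal rests on the same citation but supplies the missing bridge. You pass to a compact subsurface $K$ and show $i_K=i_S$ for curves in $K$, so the finite-type result can be used as a black box. This route is sound. It can even be freed from the bigon criterion on $S$. Any homotopy lowering $|x\cap y|$ has compact image, so it lives in some compact connected $K'\supset K$. The finite-type criterion in $K'$ then yields a bigon, and your disk argument forces that bigon into $K$.

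Three points need tidying, none fatal.

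(1) Choose $K$ connected. An innermost circle of $\partial K$ inside the bigon $D$ bounds a disk that is a component either of $\overline{S-K}$ or of $K$. Only connectedness of $K$ (which contains essential curves) excludes the second case. Without it, a disk component of $K$ inside $D$, surrounded by an annular component of $\overline{S-K}$ with both boundary circles on $\partial K$, satisfies your stated conditions on $K$ but makes $i_K>i_S$ possible.

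(2) The claim that curves essential in $S$ remain non-peripheral in $K$ is false for every choice of $K$. Any component of $\partial K$ bounding the complementary region that contains a non-planar end is essential in $S$ but peripheral in $K$. This does no harm. A curve peripheral in $K$ can be pushed into a collar of $\partial K$ off the other two curves. Then every intersection number involving it vanishes, in $K$ and hence in $S$, and the inequality is trivial in that case.

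(3) Your recap of the lower bound ("at most two points removed per point of $\beta\cap\gamma$") is an assertion rather than an argument. It is also exactly the nontrivial part of the finite-type proof. Since you cite Farb--Margalit for $K$, either drop the recap or present it only as a pointer to their proof.
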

Note that vertices in $\Gamma_1(S)$ are just non-separating curves. A path is just a finite sequence, $a_0\lr a_1\lr \cdots \lr a_n$, of non-separating curves. We first show the following lemma.
\begin{proposition}[Lemma 9, \cite{wajnryb}]\label{prop:four-cycles}
  Let $p = a_0\lr a_1\lr a_2\lr a_3 \lr a_0$ be a closed path in $\Gamma_1(S)$ so that $i(a_1,a_3)=0$. Then $p$ is null-homotopic.
\end{proposition}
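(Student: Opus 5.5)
The only 2-cells of $\Gamma_1(S)$ are triangles. Rectangles and pentagons require vertices containing at least two curves, so they do not occur when $k=1$. The plan is therefore to subdivide $p$ into triangles, that is, into triples of curves with pairwise intersection number $1$, plus degenerate back-and-forth paths. I will argue by induction on $n=i(a_0,a_2)$, with a symmetric treatment of the pair $a_1,a_3$ available if needed.

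\emph{Base cases.} If $n=1$, then $a_0\lr a_2$ is an edge and $p$ is the union of the triangles $a_0a_1a_2$ and $a_0a_2a_3$, since each of these has all pairwise intersections equal to $1$. Hence $p$ is null-homotopic. If $n=0$, I would take $d=T_{a_1}(a_0)$. Proposition \ref{prop:dehn-int} gives $i(d,a_0)=i(a_1,a_0)^2=1$ and $i(d,a_1)=i(a_0,a_1)=1$. Since $a_3$ is disjoint from $a_1$, we get $i(d,a_3)=i(a_0,a_3)=1$. Also $|i(d,a_2)-1|\le i(a_0,a_2)=0$, so $i(d,a_2)=1$. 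Thus $d$ is adjacent to all four $a_j$, and $p$ decomposes into the four triangles $a_ja_{j+1}d$ (indices mod $4$).

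\emph{Inductive step.} Suppose $n\ge 2$. Set $a_0'=T_{a_1}^{\varepsilon}(a_0)$ with $\varepsilon=\pm1$. As in the base case, $i(a_0',a_0)=i(a_0',a_1)=1$. Because $T_{a_1}$ fixes $a_3$, we also have $i(a_0',a_3)=i(a_0,a_3)=1$. Hence $a_0a_1a_0'$ and $a_0a_0'a_3$ are triangles. Pushing $p$ across them replaces it by the $4$-cycle $p'=a_0'\lr a_1\lr a_2\lr a_3\lr a_0'$, which still satisfies $i(a_1,a_3)=0$. Alternatively, one can twist along $a_3$ instead, using $T_{a_3}^{\varepsilon}(a_0)$, which stays adjacent to $a_1$ since $i(a_1,a_3)=0$. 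It then remains to show that for a suitable choice among $T_{a_1}^{\pm1},T_{a_3}^{\pm1}$ we get $i(a_0',a_2)<i(a_0,a_2)$, and to apply induction.

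\emph{Main obstacle.} This decrease is the key step, and Proposition \ref{prop:dehn-int} alone does not provide it: it only gives $|i(a_0',a_2)-1|\le n$. I would first try a local surgery argument. Put $a_0,a_1,a_2,a_3$ in pairwise minimal position and work in a regular neighborhood of $a_1\cup a_0$, which is a one-holed torus $T$. The curve $a_2$ crosses $a_1$ exactly once, so $a_2\cap T$ contains a single arc that meets $a_1$, together with arcs parallel to $a_1$ that cross $a_0$. Twisting $a_0$ once along $a_1$ in the direction matching how $a_2$ winds should remove at least one intersection point of $a_0$ with $a_2$, giving $i(a_0',a_2)\le n-1$.

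If the crossings of $a_0\cap a_2$ are not all visible near $a_1$, I would do the same analysis near $a_3$. Failing that, I would resolve $a_0$ and $a_2$ at an outermost intersection point to produce a curve $d$ with $i(d,a_1)=i(d,a_3)=1$ and $i(d,a_0),i(d,a_2)<n$. This splits $p$ into two smaller $4$-cycles $a_0a_1da_3$ and $d\,a_1a_2a_3$, each still having $i(a_1,a_3)=0$, and induction applies to both. This is essentially the approach of \cite{wajnryb}.
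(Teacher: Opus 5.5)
Your overall plan is the paper's. The paper also lowers $i(a_0,a_2)$ by repeatedly twisting about $a_1$, paying two triangles per step. It twists $a_2$ rather than $a_0$, which is equivalent because $i(T^{s}_{a_1}(a_2),a_0)=i(a_2,T^{-s}_{a_1}(a_0))$. It then closes the disjoint case with a single twist about $a_1$, as in your $n=0$ case with $a_0$ and $a_2$ swapped. Your base cases and your two-triangle move are correct.

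\textbf{The gap: the decrease step.} The gap is exactly the step you flag, and the local argument you offer for it is false. Your picture of $a_2\cap T$ is right. It consists of one arc $\lambda$ crossing $a_1$ once and crossing $a_0$ some $|m|$ times, plus $r$ arcs parallel to $a_1$, each crossing $a_0$ once, so $n=|m|+r$. But $T^{\pm1}_{a_1}$ fixes the arcs parallel to $a_1$ and only changes the twisting of $\lambda$. Hence $i(T^{\pm1}_{a_1}(a_0),a_2)=|m\pm1|+r$.

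When $m=0$, $\lambda$ runs parallel to $a_0$ and all $n$ crossings lie on the arcs parallel to $a_1$. Then both twists \emph{increase} the intersection to $n+1$. This is not the case of crossings ``not visible near $a_1$'': they are all in $T$, just not on the arc the twist moves. Such $a_2$ exist for every $n$. Close up $\lambda$ and $n$ arcs parallel to $a_1$ by disjoint essential arcs in the infinite-genus complement of $T$. The same configuration can be arranged around $a_3$ simultaneously: let $a_2$ cross the bands of $N(a_0\cup a_1)$ and $N(a_0\cup a_3)$ along $a_1$ and $a_3$ directly from outside to outside. So no choice among $T^{\pm1}_{a_1}$ and $T^{\pm1}_{a_3}$ is guaranteed to help.

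The paper asserts that the decrease follows from Proposition~\ref{prop:dehn-int}. As you correctly observed, it does not, and by the above it is false in general. So you cannot borrow that step from the paper.

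\textbf{The surgery fallback.} Your fallback does not close the gap either. Surgering $a_0$ along an arc $\omega$ of $a_2$ between consecutive points of $a_0\cap a_2$ produces curves $\eta_j\cup\omega$. Each contains only one of the two subarcs $\eta_1,\eta_2$ into which $\partial\omega$ cuts $a_0$. Suppose $\omega$ avoids $a_1\cup a_3$ but the points $a_0\cap a_1$ and $a_0\cap a_3$ lie in different $\eta_j$. Then neither resulting curve meets both $a_1$ and $a_3$ once. When $n=2$ there is at most one arc $\omega$ of $a_2$ avoiding $a_1\cup a_3$, so you may have no other choice.

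\textbf{What is missing.} You need a genuine connectivity statement: that the curves meeting both $a_1$ and $a_3$ exactly once span a connected subgraph of $\mathcal{G}(S)$. A path $a_0=x_0,x_1,\dots,x_m=a_2$ in that subgraph splits $p$ into the triangles $a_1x_jx_{j+1}$ and $a_3x_jx_{j+1}$, which is what your induction on $n$ is trying to produce. Proving that connectivity is the missing idea. Alternatively, you need a complexity that some move preserving $i(\cdot,a_1)=i(\cdot,a_3)=1$ provably lowers.
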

\begin{proof}
  Note that for some $s \in \{1,-1\}$ the Dehn twist $T_{a_1}^s$ satisfies $i(T^s_{a_1}(a_2),a_0) < i(a_2,a_0)$, for any $k\in \Z$ the intersection $i(T_{a_1}^k(a_2), T^{k\pm 1}_{a_1}(a_2)) = 1$, and for any $k\in \Z$ the intersection $i(T^k_{a_1}(a_2), a_1) = i(T^k_{a_1}(a_2), a_3)=1$, using the identity in Proposition \ref{prop:dehn-int}. This means that for some $s\in \{1,-1\}$ and $k>0$ the sequence of closed paths $p_k = a_0\lr a_1\lr T^{sk}_{a_1}(a_2)\lr a_3 \lr a_0$ is such that $i(T^{sk}_{a_1}(a_2),a_0)<i(T^{sk-s}_{a_1}(a_2),a_0)$ and $p_k$ and $p_{k+1}$ are homotopic as $p_{k+1}-p_k$ can be split into two triangle paths. Let $p_N$ be the path where the first and third curves are disjoint, then it follows by induction that $p_0=p$ and $p_N$ are homotopic. Thus without loss of generality we now assume that $i(a_0,a_2)=0$ in $p$.
  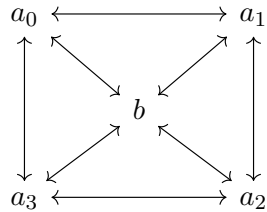
\begin{figure}[h]
    \[\begin{tikzcd}
	{a_0} && {a_1} \\
	& b \\
	{a_3} && {a_2}
	\arrow[tail reversed, from=1-1, to=1-3]
	\arrow[tail reversed, from=1-1, to=2-2]
	\arrow[tail reversed, from=1-3, to=2-2]
	\arrow[tail reversed, from=1-3, to=3-3]
	\arrow[tail reversed, from=3-1, to=1-1]
	\arrow[tail reversed, from=3-1, to=2-2]
	\arrow[tail reversed, from=3-3, to=2-2]
	\arrow[tail reversed, from=3-3, to=3-1]
\end{tikzcd}\]
\caption{Splitting the closed path $p$ into triangles.}
\label{fig:split-in-4}
\end{figure}
Now define $b = T_{a_1}(a_2)$. Using Proposition \ref{prop:dehn-int} it can be shown that $i(a_i,b) = 1$. Thus the path $p$ can be split into four paths of type 1, see Figure \ref{fig:split-in-4}. Hence $p$ is null-homotopic.
\end{proof}
\begin{lemma}\label{lem:radius-1}
  Every closed path $p=a_0\lr \cdots \lr a_{n-1} \lr a_0$ in $\Gamma_1(S)$ of radius $1$ with respect to $a_0$ is null-homotopic. 
\end{lemma}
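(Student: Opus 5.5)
The hypothesis means that every vertex $a_i$ of $p$ satisfies $i(a_0,a_i)\le 1$, since in $\Gamma_1(S)$ each vertex is a single curve. So each $a_i$ either equals $a_0$, is disjoint from $a_0$, or meets $a_0$ exactly once. The plan is to cone $p$ off to $a_0$ and fill the resulting small cycles with triangles and with the quadrilaterals handled by Proposition \ref{prop:four-cycles}.

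\textbf{Reduction.} If $a_0$ reappears in the middle of $p$, cut $p$ at that occurrence. This writes $p$ as a product of shorter closed paths based at $a_0$, each still of radius $1$ with respect to $a_0$. So I may assume $a_i\neq a_0$ for $0<i<n$.

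\textbf{Induction.} I induct on the number $m$ of interior vertices $a_i$ with $i(a_0,a_i)=0$.

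\emph{Case $m=0$.} Every $a_i$ with $0<i<n$ satisfies $i(a_0,a_i)=1$, so each is joined to $a_0$ by an edge. For each $0<i<n-1$, the curves $a_0,a_i,a_{i+1}$ pairwise meet once, so $a_0\lr a_i\lr a_{i+1}\lr a_0$ is a triangle (type 1) and bounds a $2$-cell. Gluing these triangles in a fan gives the null-homotopy.

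\emph{Case $m>0$, isolated disjoint vertex.} Suppose some interior $a_i$ is disjoint from $a_0$ while both neighbours $a_{i-1},a_{i+1}$ meet $a_0$ once. Then $a_0\lr a_{i-1}\lr a_i\lr a_{i+1}\lr a_0$ is a $4$-cycle whose diagonal $a_0,a_i$ is disjoint. After relabelling so that this diagonal plays the role of $a_1,a_3$, Proposition \ref{prop:four-cycles} shows it is null-homotopic. Hence $p$ is homotopic to the path with $a_i$ replaced by the detour through $a_0$. Splitting at $a_0$ again lowers $m$. The same works at the ends of $p$, where the neighbour of $a_i$ is $a_0$ itself (then the cycle is a triangle or degenerate).

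\emph{Case $m>0$, consecutive disjoint vertices.} This is the main obstacle. If $a_i,a_{i+1}$ are both disjoint from $a_0$, I insert a curve $c$ with
$$i(c,a_0)=i(c,a_i)=i(c,a_{i+1})=1.$$
Then $a_i\lr c\lr a_{i+1}\lr a_i$ is a triangle, so the edge $a_i\lr a_{i+1}$ is homotopic to $a_i\lr c\lr a_{i+1}$. The new vertex $c$ meets $a_0$ once, so radius $1$ is preserved. Repeating this along each run of disjoint vertices makes all of them isolated, without adding disjoint vertices, and the previous case then lowers $m$.

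\textbf{Constructing $c$.} Let $T$ be a regular neighbourhood of $a_i\cup a_{i+1}$, a one-holed torus disjoint from $a_0$.
\begin{enumerate}
\item Inside $T$ take $e=T_{a_i}(a_{i+1})$. By Proposition \ref{prop:dehn-int}, $e$ meets both $a_i$ and $a_{i+1}$ once.
\item Outside $T$ I need a curve $d$ meeting $a_0$ once. This needs $a_0$ to be non-separating in $S-T$, i.e.\ $S-(a_0\cup a_i\cup a_{i+1})$ connected. That holds because $S-T$ is connected with one boundary circle, and if $a_0$ separated it then $a_0$ would separate $S$.
\item Let $c$ be the band sum of $d$ and $e$ along an arc in $S-(T\cup a_0)$ from $\partial T$ to $d$, chosen away from all the curves. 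This is possible by connectedness of $S-(T\cup a_0)$.
\end{enumerate}
The resulting $c$ meets $a_0$, $a_i$, $a_{i+1}$ each exactly once, and these counts are geometric intersection numbers because single transverse intersections form no bigons. Since $c$ meets some curve exactly once, it is non-separating, so it is a vertex of $\Gamma_1(S)$. The delicate point to check carefully is that the band sum produces a simple curve with exactly these intersection counts.
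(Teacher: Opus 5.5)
Your proof is correct. Its skeleton matches the paper's: cone $p$ off to $a_0$, fill with triangles where consecutive vertices both meet $a_0$ once, and dispose of an isolated vertex disjoint from $a_0$ via Proposition~\ref{prop:four-cycles}.

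Where you differ is in how you handle runs of consecutive vertices disjoint from $a_0$. The paper takes a whole run $a_0\lr a_j\lr a_{j+1}\lr\cdots\lr a_{j+m}\lr a_0$. It first normalizes so that $i(a_j,a_{j+2})=0$, by pointing to the twisting argument of Proposition~\ref{prop:four-cycles}. It then uses $b=T_{a_j}(a_{j+1})$, which by Proposition~\ref{prop:dehn-int} meets $a_0$, $a_{j+1}$ and $a_{j+2}$ once each, to shorten the run one vertex at a time.

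You instead break a run between any two consecutive disjoint vertices with a single triangle. The curve $c$ you use meets $a_0,a_i,a_{i+1}$ once each and is built by band-summing $T_{a_i}(a_{i+1})$ with a dual curve of $a_0$ in $S-T$. The one topological input, that $a_0$ stays non-separating in $S-T$, you verify correctly.

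What each route buys:
\begin{itemize}
\item Yours does not need the normalization $i(a_j,a_{j+2})=0$. The paper only sketches that step, and it is not obvious that twisting can always achieve it. The price is a small cut-and-paste construction.
\item The paper's stays entirely within Dehn-twist intersection estimates and uses only curves generated from the path itself.
\end{itemize}
Both arguments rely on Proposition~\ref{prop:four-cycles} for the isolated case.

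Two small remarks. First, the band arc must also be continued inside $T$ from $\partial T$ to $e$; this is possible within the collar $T\setminus(\a_i\cup\a_{i+1})$. Second, your comment about the ends of $p$ is vacuous: the neighbours of $a_0$ along $p$ automatically meet $a_0$ once, so they are never disjoint from it.
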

\begin{proof}
  Since $p$ has radius $1$ with respect to $a_0$ it follows that $i(a_0, a_i) \leq 1$ for all $i$. In the case that $i(a_0, a_i) = 1$ for all $i>0$ it follows that the path $p$ can be split into $n-2$ paths of type 1, and hence conclude that $p$ is null-homotopic. Now consider the case when $i(a_0, a_j) = 0$ for exactly one $j\neq 0$ and $i(a_0,a_i) = 1$ for $i\neq 0,j$. In this case $p$ can be split into paths of type 1 and the closed path $q = a_0\lr a_{j-1}\lr a_j\lr a_{j+1} \lr a_0$. Since paths of type are null-homotopic and $q$ is null-homotopic by Proposition \ref{prop:four-cycles} it follows that $p$ is null homotopic.\\
  \begin{figure}[t]
    \[\begin{tikzcd}
	& {a_0} & {a_7} &&&& {a_0} & {a_7} \\
	{a_1} &&& {a_6} && {a_1} &&& {a_6} \\
	{a_2} &&& {a_5} && {a_2} &&& {a_5} \\
	& {a_3} & {a_4} &&&& {a_3} & {a_4} \\
	\\
	&&&& {a_0} & {a_7} \\
	&&& {a_1} &&& {a_6} \\
	&&& {a_2} &&& {a_5} \\
	&&&& {a_3} & {a_4}
	\arrow[tail reversed, from=1-2, to=2-1]
	\arrow[tail reversed, from=1-2, to=2-4]
	\arrow[tail reversed, from=1-2, to=3-1]
	\arrow[tail reversed, from=1-2, to=3-4]
	\arrow[tail reversed, from=1-2, to=4-2]
	\arrow[tail reversed, from=1-2, to=4-3]
	\arrow[tail reversed, from=1-3, to=1-2]
	\arrow[tail reversed, from=1-7, to=2-6]
	\arrow[tail reversed, from=1-7, to=2-9]
	\arrow[tail reversed, from=1-7, to=3-6]
	\arrow[tail reversed, from=1-7, to=3-9]
	\arrow[tail reversed, from=1-7, to=4-7]
	\arrow[tail reversed, from=1-8, to=1-7]
	\arrow[tail reversed, from=2-1, to=3-1]
	\arrow[tail reversed, from=2-4, to=1-3]
	\arrow[tail reversed, from=2-6, to=3-6]
	\arrow[tail reversed, from=2-9, to=1-8]
	\arrow[tail reversed, from=3-1, to=4-2]
	\arrow[tail reversed, from=3-4, to=2-4]
	\arrow[tail reversed, from=3-6, to=4-7]
	\arrow[tail reversed, from=3-9, to=2-9]
	\arrow[tail reversed, from=4-2, to=4-3]
	\arrow[tail reversed, from=4-3, to=3-4]
	\arrow[tail reversed, from=4-7, to=4-8]
	\arrow[tail reversed, from=4-8, to=3-9]
	\arrow[tail reversed, from=6-5, to=7-4]
	\arrow[tail reversed, from=6-5, to=7-7]
	\arrow[tail reversed, from=6-5, to=9-5]
	\arrow[tail reversed, from=6-6, to=6-5]
	\arrow[tail reversed, from=7-4, to=8-4]
	\arrow[tail reversed, from=7-7, to=6-6]
	\arrow[tail reversed, from=8-4, to=9-5]
	\arrow[tail reversed, from=8-7, to=7-7]
	\arrow[tail reversed, from=9-5, to=9-6]
	\arrow[tail reversed, from=9-6, to=8-7]
\end{tikzcd}\]
\caption{Examples of the three cases in proof of Lemma \ref{lem:radius-1}.}
\end{figure}
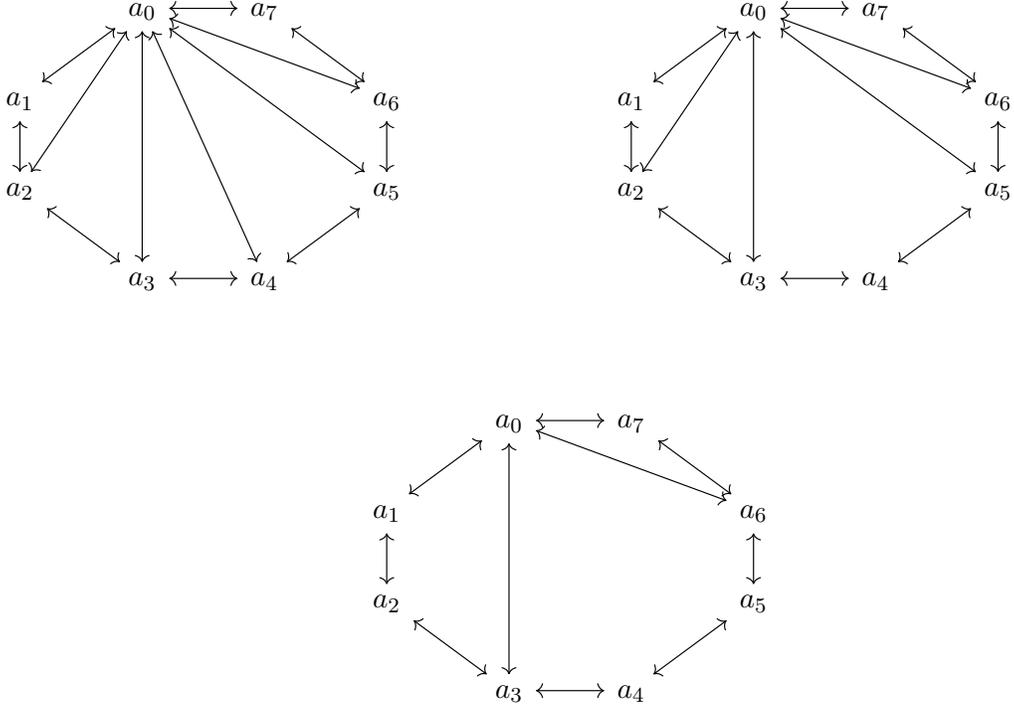

Finally, consider the case where the intersection number of $a_0$ with some $a_i$ are 0 and with the rest it is $1$. Then the path $p$ can be split into paths of type 1, paths of the type in Proposition \ref{prop:four-cycles}, and some larger paths. Paths of the first two types are null homotopic, by definition of $\Gamma_1(S)$ and Proposition \ref{prop:four-cycles} respectively, thus we only have to show that paths of the third type are null-homotopic. Let $q = a_0\lr a_j \lr a_{j+1}\lr \cdots \lr a_{j+m}\lr a_0$ be a path of the third type. Then we know that $i(a_0, a_{j+s})=0$ for all $0< s< m$. Using the same argument as in Proposition \ref{prop:four-cycles} we can assume without loss of generality that $i(a_j, a_{j+2})=0$. Let $b=T_{a_j}(a_{j+1})$, then using the intersection identity in \ref{prop:dehn-int} it follows that $i(b,a_0) = i(b,a_{j+2}) = i(b,a_{j+1}) = 1$. This means that $q$ is homotopic to the path $a_0\lr a_j \lr b \lr a_{j+2} \lr \cdots \lr a_{j+m}\lr a_0$ which, due to $a_0\lr a_j\lr b\lr a_0$ being a path of type 1, is homotopic to the path $q_1=a_0\lr b\lr a_{j+2}\lr \cdots \lr a_{j+m} \lr a_0$. Repeating this process we eventually arrive at a path $q_n$ which is of the type in Proposition \ref{prop:four-cycles}, which is null-homotopic. This completes the proof.
\end{proof}
\begin{proposition}\label{prop:base-case}
  Every closed path $p=a_0\lr\cdots\lr a_{n-1}\lr a_0$ in $\Gamma_1(S)$ is null-homotopic.
\end{proposition}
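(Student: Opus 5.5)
The plan is to reduce an arbitrary closed path to closed paths of radius $1$, which are null-homotopic by Lemma \ref{lem:radius-1}. This uses the non-planar end to find a curve far from everything in $p$.

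\textbf{Step 1: choose an auxiliary curve.} Let $K\subset S$ be a compact subsurface containing representatives of all the curves $a_0,\dots,a_{n-1}$. Since $S$ has a non-planar end, $S-K$ has a component of positive genus. In that component we choose a non-separating curve $d$ such that $i(d,a_i)=0$ for every $i$. We then pick a curve $e$ with $i(d,e)=1$ and $i(e,a_i)=0$ for all $i$, for instance a dual curve inside a one-holed torus in $S-K$.

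\textbf{Step 2: cone $p$ off to a fixed vertex.} For each $i$ we connect $a_i$ to $e$ by a path that stays at radius $\le 1$ with respect to $e$. Concretely, for each $i$ we choose a non-separating curve $b_i$ with $i(b_i,a_i)=1$ and $i(b_i,e)=1$. We build $b_i$ as a band sum: take a curve meeting $a_i$ once inside $K$ and tube it to $d$ along an arc that avoids $a_i$. The resulting curve meets $e$ exactly once, and it is non-separating because it meets $a_i$ once. This gives the path $a_i \lr b_i \lr e$.

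\textbf{Step 3: decompose $p$ into radius-$1$ pieces.} The loop $p$ is homotopic to the product of the loops
$$q_i = e\lr b_i\lr a_i\lr a_{i+1}\lr b_{i+1}\lr e,$$
based at $e$. We want each $q_i$ to have radius $1$ with respect to $e$:
\begin{itemize}
\item $i(e,b_i)=i(e,b_{i+1})=1$ by construction;
\item $i(e,a_i)=i(e,a_{i+1})=0$, since $e$ is disjoint from all the $a_i$.
\end{itemize}
Hence every vertex of $q_i$ has $\min_{b\in v}i(e,b)\le1$, so $q_i$ has radius $\le1$ with respect to $e$. Lemma \ref{lem:radius-1} then shows that each $q_i$ is null-homotopic, and therefore $p$ is null-homotopic.

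\textbf{Expected obstacle.} The step I expect to need the most care is checking that Lemma \ref{lem:radius-1} really applies to $q_i$. That lemma was stated for closed paths whose vertices have intersection number $0$ or $1$ with the base curve, and its proof splits the path into pieces, including the case of consecutive vertices disjoint from $a_0$ (here $a_i,a_{i+1}$). Since $e$ is disjoint from $a_i$ and $a_{i+1}$, the segment $b_i\lr a_i\lr a_{i+1}\lr b_{i+1}$ is exactly the "third type" treated there, so the lemma covers it.

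If instead one insists that intermediate curves must be adjacent to $e$, there is a fallback. Replace the route through $a_i,a_{i+1}$ by first inserting $d$: both $a_i$ and $a_{i+1}$ are disjoint from $d$, and $i(d,e)=1$. This lets us use Proposition \ref{prop:four-cycles}-type squares $e\lr b_i\lr a_i \lr \cdots$ together with triangles. The construction of $b_i$ that simultaneously meets $a_i$ once and $e$ once also deserves a careful picture, but it is a standard local surgery in the region $K\cup(\text{handle containing } d,e)$.
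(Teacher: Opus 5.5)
Your argument is correct and uses the same mechanism as the paper: a cone curve ($e$ for you, $b_2$ in the paper) placed in a handle outside a compact set containing $p$, spokes meeting it once, and Lemma \ref{lem:radius-1}. Where you differ is the decomposition of $p$.

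The paper builds only two spokes, $b_0$ and $b_1$, dual to $a_0$ and $a_1$. It writes $p$ as the product of the 5-cycle $b_2\lr b_1\lr a_1\lr a_0\lr b_0\lr b_2$ and the long loop $b_2\lr b_1\lr a_1\lr\cdots\lr a_{n-1}\lr a_0\lr b_0\lr b_2$. The long loop contains a run of $n$ consecutive vertices disjoint from $b_2$. So the paper needs the full inductive third case of Lemma \ref{lem:radius-1}, which shortens such runs one vertex at a time.

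You build a spoke for every vertex, so each $q_i$ is a 5-cycle with a run of only two vertices disjoint from $e$. You therefore need Lemma \ref{lem:radius-1} only for 5-cycles, where its third case reduces in one step to Proposition \ref{prop:four-cycles}. Your decomposition refines the paper's: its short 5-cycle is your $q_0$ reversed, and its long loop is $q_1\cdots q_{n-1}$.

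The price is $n$ auxiliary curves instead of two. However, no relation among the $b_i$ is required, so each comes from an independent local band sum. This is more explicit than the paper's gluing of arcs $\sigma_j\cup\tau_j$ across $\delta$, where the existence of the disjoint arcs $\sigma_0,\sigma_1$ is only asserted.

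One small correction: a dual curve to $a_i$ need not exist \emph{inside} $K$, since $a_i$ may separate $K$. Take it instead in the complement of the one-holed torus containing $d$ and $e$. There $a_i$ is still non-separating, because removing a subsurface with one boundary circle does not disconnect $S-\a_i$. Your fallback paragraph is not needed.
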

\begin{proof}
  The idea of the proof is to construct curves $\b_0,\ \b_1,\ \b_2$ in $S$ so that $i(b_2,a_j) = 0$ for all $j$ and $i(b_2,b_0) = i(b_2,b_1)= i(b_0,a_0) = i(b_1,a_1) = 1$. This means that the path $q= b_2 \lr b_1 \lr a_1 \lr \cdots \lr a_{n-1}\lr a_0 \lr b_0 \lr b_2$ has radius 1 with respect to $b_2$. Since the path $b_2\lr b_1\lr a_1 \lr a_0\lr b_0\lr b_2$ is also radius $1$ with respect to $b_2$ it follows from Lemma \ref{lem:radius-1} that $q$ is homotopic to $p$ and that $q$ is null homotopic. It follows that $p$ is null-homotopic.\\

  Now we describe the construction of the curves $\b_i$. Let $R$ be a subsurface of $S$ with compact boundary and at least one non-planar end so that all the curves $\a_0,\cdots, \a_{n-1}$ are contained in $R$. Let $\delta$ be a separating curve in $R$ disjoint from all $\a_i$ so that the two surfaces, $R_1$ and $R_2$, obtained by cutting $R$ along $\delta$ satisfy the following: $R_1$ is compact and contains all $\a_i$ and $R_2$ is of infinite-type with a single compact boundary component corresponding to the curve $\delta$ in $R$ and has non-planar ends. Let $\tau_0$ and $\tau_1$ be homotopically distinct arcs in $R_2$ with their end points lying on the boundary of $R_2$ so that $\tau_0\cap \tau_1 = \emptyset$. Since $\delta$ and $\a_0$ are disjoint and $i(a_0,a_1)=1$, it can be shown that there exist disjoint essential arcs, $\sigma_0,\ \sigma_1$, with end points on the boundary of $R_1$ corresponding to $\delta$ and $|\sigma_0\cap \a_0| = 1$ and $|\sigma_1\cap \a_1|=1$. Gluing $R_1$ and $R_2$ together along the boundary corresponding to $\delta$ in such a way that $\beta_0 = \sigma_0\cup \tau_0$ and $\beta_1 = \sigma_1\cup \tau_1$ are closed curves. Then by construction we have $i(b_0,a_0) = i(b_1,a_1) = 1$ and that $b_i$ are non-separating. Let $\b_2$ be a non-separating curve in $R_2$ which intersects $\tau_0$ and $\tau_1$ exactly once. Thus we have our desired curves $b_0,\ b_1,$ and $b_2$.
\end{proof}
The above proposition implies that $\Gamma_1(S)$ is simply-connected when $S$ has non-planar ends. We use this as our base case for induction on $k$, to show that $\Gamma_k(S)$ is also simply-connected. From here on we assume the induction hypothesis: for every surface $S$ the complex $\Gamma_j(S)$ is simply connected for all $j<k$. We prove the simple connectedness now by also inducting on radius of the path. We start by a special case of radius 0 curves.
\begin{proposition}\label{prop:sp-radius-0}
  Suppose that $p = v_0 \lr \cdots \lr v_{n-1}$ is a closed path in $\Gamma_k(S)$ so that all the vertices $v_i$ contain a common curve $c$. Then $p$ is null-homotopic.
\end{proposition}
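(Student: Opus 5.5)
The plan is to reduce to the induction hypothesis by cutting along $c$. Let $S_c$ denote the surface obtained by cutting $S$ along a representative $\g$ of $c$. Since $c$ is non-separating, $S_c$ is connected, has two new compact boundary components, and has the same space of ends as $S$. In particular it is still of infinite type with at least one non-planar end, so the induction hypothesis applies to it.

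First I will define the map on vertices. For a cut system $w=(c,a_1,\dots,a_{k-1})\in\Gamma^0_k(S)$, the curves $a_1,\dots,a_{k-1}$ are disjoint from $c$ and can be isotoped off $\g$, so they determine curves in $S_c$. Because $S-(\g\cup\bigcup\a_i)$ is connected, these curves are non-separating in $S_c$ and form a cut system there. Conversely, any cut system $(a_1,\dots,a_{k-1})$ in $S_c$ gives the cut system $(c,a_1,\dots,a_{k-1})$ in $S$. One technical point is that curves in $S_c$ parallel to the new boundary are inessential, but such curves are isotopic to $c$ in $S$, and $c$ cannot appear twice. The resulting map $\Phi:\Gamma^0_{k-1}(S_c)\to\Gamma^0_k(S)$ is therefore a bijection onto the vertices containing $c$.

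Next I will check that $\Phi$ extends to a cellular embedding of $\Gamma_{k-1}(S_c)$ onto the subcomplex $\Gamma_k^c(S)$ spanned by cut systems containing $c$.
\begin{enumerate}
\item \textbf{Edges.} An elementary move between two vertices both containing $c$ swaps curves $a\neq c$ and $b\neq c$ with $i(a,b)=1$. Both curves are disjoint from $c$, so they lie in $S_c$, and since no bigon through $\g$ can reduce intersection, $i(a,b)=1$ holds there as well. This gives an elementary move in $\Gamma_{k-1}(S_c)$, and the converse is immediate.
\item \textbf{2-cells.} Triangles, rectangles and pentagons whose vertices all contain $c$ have $c$ among the fixed curves $a_i$. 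Here I must note that in a pentagon every vertex contains $c$ only when $c$ is one of the common curves $a_2,\dots$, since the $b_i$ rotate. Hence every such cell corresponds to a cell of the same type in $\Gamma_{k-1}(S_c)$.
\end{enumerate}
This step involves routine checks on intersection numbers in $S_c$ versus $S$, using the bigon criterion.

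The main step is then immediate. The closed path $p$ lies in $\Gamma^1_k(S)$ with every vertex containing $c$, so each edge of $p$ is an edge of $\Gamma_k^c(S)$, and $p=\Phi(p')$ for a closed path $p'$ in $\Gamma_{k-1}(S_c)$. By the induction hypothesis, $\Gamma_{k-1}(S_c)$ is simply connected, so $p'$ is null-homotopic there. Since $\Phi$ is cellular and carries 2-cells to 2-cells, the image of a null-homotopy of $p'$ is a null-homotopy of $p$ in $\Gamma_k(S)$.

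The obstacle I expect is making sure the induction hypothesis genuinely applies to $S_c$. That requires three things: the inductive statement must be formulated for surfaces with compact boundary, which the paper allows; $S_c$ must retain a non-planar end, which it does since cutting along a compact curve does not change the ends; and the correspondence of 2-cells must be exact.

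When $k=1$ the statement reads differently, because every vertex is the single curve $c$, so $p$ is constant and trivially null-homotopic. The case $k\geq 2$ is handled by the argument above.
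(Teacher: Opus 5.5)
Your proof is correct, and it has the same skeleton as the paper's: delete $c$, invoke the inductive hypothesis at level $k-1$, then restore $c$. The difference is that you apply the hypothesis to the cut surface $S_c$. The paper instead forms $p'$ in $\Gamma_{k-1}(S)$ itself and only observes that restoring $c$ preserves the edges of $p'$.

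The difference is not cosmetic. A null-homotopy of $p'$ in $\Gamma_{k-1}(S)$ may pass through cut systems $u$ that contain $c$, contain a curve meeting $c$, or have $u\cup\{c\}$ separating. At such vertices ``adding $c$ back'' does not produce a vertex of $\Gamma_k(S)$, so the filling does not transfer. Your version makes the restoration valid on the entire filling, because every vertex, edge and 2-cell of $\Gamma_{k-1}(S_c)$ is carried to one of $\Gamma_k(S)$. It relies on the inductive hypothesis being quantified over all surfaces with compact boundary and a non-planar end, and you correctly verify that $S_c$ is such a surface; the paper states its hypothesis this way but does not use it here.

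The price is comparing intersection numbers in $S_c$ and $S$. For pushing the null-homotopy forward, only the easy direction is needed: disjointness persists in $S$, and a single transverse intersection point cannot bound a bigon, so $i=1$ persists. For lifting the edges of $p$, put $\gamma$ and the finitely many curves of $p$ in pairwise minimal position simultaneously. This gives representatives in $S_c$ meeting once, while $i_{S_c}\geq i_S$ is automatic; that is cleaner than your phrase about bigons through $\gamma$.

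Your remark about pentagons (that $c$ must be one of the fixed curves) is correct but unnecessary. It only shows that $\Phi$ is onto the 2-cells of the subcomplex, and the argument never uses that.
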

\begin{proof}
  Let $p'$ be the path in $\Gamma_{k-1}(S)$ where the vertices are $v_i- \{c\}$. By the induction hypothesis, it follows that $p'$ is null-homotopic, meaning that it can be split into paths of type 1, 2, and 3. Since adding $c$ back to each vertex in $p'$ does not remove any edge connections therefore $p$ is null-homotopic.
\end{proof}
\begin{proposition}[Lemma 13, \cite{wajnryb}]\label{prop:hex-path}
  Let $a_0, a_1, a_2$ be disjoint non-separating curves so that:
  \begin{enumerate}
    \item all unions $\a_i\cup \a_j$ are non-separating in $S$.
    \item the union $\a_0\cup \a_1\cup \a_2$ separates $S$.
  \end{enumerate}
  Then there exists disjoint non-separating curves $b_0, b_1, b_2$ so that $i(a_i,b_i) = 1$, $i(a_{i+1}, b_{i}) = 1$ (indices modulo $3$), and $i(a_i, b_{i+1}) = 0$ for all other $i,j$. Moreover, if $(a_0,a_1,c_2,\cdots, c_{k-1})$ is a vertex in $\Gamma_k(S)$ then the closed path $p = v_0 \lr \cdots \lr v_5 \lr v_0$ defined as
  \begin{align*}
    (a_0, a_1, c_2,\cdots, c_{k-1})\lr &(a_0, b_1,c_2,\cdots, c_{k-1}) \lr (a_0, a_2,c_2,\cdots, c_{k-1}) \lr (b_0, a_2,c_2,\cdots, c_{k-1})\\ 
    &\ \lr (a_1,a_2,c_2,\cdots, c_{k-1})\lr (a_1, b_2,c_2,\cdots, c_{k-1})\lr (a_0, a_1,c_2,\cdots, c_{k-1})
  \end{align*}
  is null-homotopic in $\Gamma_k(S)$.
\end{proposition}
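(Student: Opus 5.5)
The plan has two parts: build $b_0,b_1,b_2$ by cutting and regluing arcs, then fill the hexagon $p$ with $2$-cells, using Proposition \ref{prop:sp-radius-0} wherever a sub-cycle keeps a curve fixed.

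\textbf{Constructing the $b_i$.} Cut $S$ along $\a_0\cup\a_1\cup\a_2$. By hypothesis (2) this gives at least two components. By hypothesis (1) no two of the curves separate, so there are exactly two components $S'$ and $S''$, and each contains exactly one boundary copy of each $\a_i$.

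We may assume the $c_j$ are disjoint from $a_0,a_1,a_2$. Their disjointness from $a_0$ and $a_1$ is given by the vertex $(a_0,a_1,c_2,\dots)$. For $a_2$ the statement implicitly requires it, since $(a_0,a_2,c_2,\dots)$ must be a vertex.

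In $S'$ minus the $c_j$, choose three pairwise disjoint arcs $\sigma_0',\sigma_1',\sigma_2'$, where $\sigma_i'$ joins the copy of $\a_i$ to the copy of $\a_{i+1}$. Such arcs exist inside a pair-of-pants neighbourhood of the three boundary curves together with two connecting arcs. Each boundary copy of $\a_i$ then carries two endpoints, one from $\sigma_i'$ and one from $\sigma_{i-1}'$.

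Do the same in $S''$, giving arcs $\sigma_i''$. We may need to precompose with a homeomorphism of $S''$, or just slide endpoints, so that the two endpoints on each $\a_i$ in $S''$ are glued to the matching endpoints from $S'$. Then set $\b_i=\sigma_i'\cup\sigma_i''$.

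By construction $|\b_i\cap\a_i|=|\b_i\cap\a_{i+1}|=1$ and $\b_i\cap\a_{i+2}=\emptyset$, and the $\b_i$ are pairwise disjoint. Each $\b_i$ meets $\a_i$ once, so it is non-separating, and in minimal position the intersection numbers are as required.

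\textbf{Checking the vertices.} Each vertex of $p$ must be a cut system. Consider a move replacing $a_1$ by $b_1$ in $(a_0,a_1,c_2,\dots)$. Here $b_1$ is disjoint from $a_0$ and the $c_j$ and meets $a_1$ exactly once. Hence in the connected surface $S-(\a_0\cup\bigcup\gamma_j)$, the curves $\a_1$ and $\b_1$ form a dual pair, so removing $\b_1$ instead of $\a_1$ leaves a connected complement. The other vertices are checked the same way.

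\textbf{Null-homotopy.} Following Wajnryb, I would fill the hexagon with $2$-cells, together with Proposition \ref{prop:sp-radius-0} for any sub-cycle whose vertices all contain a common curve. Note that $a_0,b_0,a_1,b_1,a_2,b_2$ form a $6$-chain, each curve meeting its two neighbours once.

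The concrete attempt is to introduce one auxiliary non-separating curve $e$, disjoint from the $c_j$, that closes a $5$-chain. For example, take $e$ meeting $a_1$ and $b_2$ once each and disjoint from the rest. Such an $e$ can be built by the same arc-gluing in $S'\cup S''$: an arc through $\a_1$, an arc through $\b_2$, and the complementary arc.

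The cyclic $5$-chain $a_1,b_1,a_2,b_2,e$ gives a pentagon. Its vertices are pairs of alternate curves, each augmented by $(c_2,\dots,c_{k-1})$. The region between this pentagon and $p$ should then split into cycles of two kinds:
\begin{itemize}
\item cycles all of whose vertices contain $a_0$, or all contain $a_1$, which are null-homotopic by Proposition \ref{prop:sp-radius-0} since they lie in $\Gamma_{k-1}$ of the cut surface, by the induction hypothesis;
\item rectangles, coming from pairs such as $(a_0,b_0)$ and $(a_2,b_1)$ with disjoint supports.
\end{itemize}

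\textbf{Main obstacle.} The step I expect to be hardest is this combinatorial decomposition. I must choose $e$, possibly with a second auxiliary curve, so that every intermediate pair is genuinely a cut system, meaning its union is non-separating together with the $c_j$. I also need every triangle and rectangle used to have the exact intersection pattern required by the definitions of the $2$-cells. The fallback, if a direct pentagon decomposition fails, is to peel off the sub-paths $v_0\lr v_1\lr v_2$ (all containing $a_0$) and $v_4\lr v_5\lr v_0$ (all containing $a_1$). I would connect their endpoints through vertices containing $a_0$, respectively $a_1$, and apply Proposition \ref{prop:sp-radius-0} to the resulting sub-cycles. This reduces $p$ to a short cycle through $v_2,v_3,v_4$ and the auxiliary vertices, which should be a single pentagon.
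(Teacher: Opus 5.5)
\textbf{Gap in the filling.} Your construction of $b_0,b_1,b_2$ and your check that the six vertices are cut systems are fine, and they agree with the paper. The gap is in filling the hexagon, and it has three parts.

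First, the auxiliary curve you propose cannot exist. You want it to meet $a_1$ and $b_2$ once and be disjoint from $a_0,b_0,b_1,a_2$. But $\alpha_0\cup\alpha_1\cup\alpha_2$ cuts $S$ into two components, and each $\alpha_i$ is adjacent to both. So a transverse closed curve crosses this union an even number of times. Since $i(e,a_j)$ has the parity of any transverse count, $i(e,a_0)+i(e,a_1)+i(e,a_2)$ is even for every curve $e$. A curve crossing $\alpha_1$ once and missing $\alpha_0,\alpha_2$ would pass into the other component and never return.

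Second, the decomposition is never produced, and your fallback cannot end in a single pentagon. Look at the corner $v_2\lr v_3\lr v_4$: the second move removes the curve $b_0$ that the first move just introduced. In rectangles and pentagons every move removes a curve untouched by the previous move. So such a corner can lie only in a triangle, and that would need $i(a_0,a_1)=1$. Hence no cycle through $v_2,v_3,v_4$ is a single $2$-cell.

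Third, the fallback is circular. Replacing $v_0\lr v_1\lr v_2$ and $v_4\lr v_5\lr v_0$ by other $a_0$- and $a_1$-segments via Proposition~\ref{prop:sp-radius-0} just returns a loop made of an $a_0$-, an $a_2$- and an $a_1$-segment. No vertex contains all three curves, since their union separates. That is exactly the configuration this proposition exists to handle in Case 2 of Lemma~\ref{lem:radius-zero}, so the reduction makes no progress.

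\textbf{The missing idea and a repair.} You need an auxiliary curve meeting each of $a_0$, $b_0$, $a_1$ once, so that the corner at $v_3$ is filled by two triangles sharing a vertex that contains $a_2$. The paper takes $c=T_{a_1}(b_0)$. By Proposition~\ref{prop:dehn-int}, $c$ meets $a_0,b_0,a_1,b_1$ once and misses $a_2,b_2$. Set $w_0=(b_1,b_2,\dots)$, $w_1=(c,b_2,\dots)$ and $w_2=(c,a_2,\dots)$. The hexagon is then the union of:
\begin{itemize}
\item the triangles $v_2v_3w_2$, $v_3v_4w_2$ and $w_0w_1v_5$;
\item the rectangles $v_0v_1w_0v_5$ and $w_2v_4v_5w_1$;
\item the pentagon $v_1v_2w_2w_1w_0$ on the $5$-chain $a_0,c,b_1,a_2,b_2$.
\end{itemize}

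Your own pentagon shape can be rescued with $e=T_{a_0}(b_0)$. This curve meets $a_0,b_0,a_1,b_2$ once and misses $b_1,a_2$; note that it meets $a_0$, as parity demands. Then $a_1,b_1,a_2,b_2,e$ is a $5$-chain. The rest of the hexagon splits into:
\begin{itemize}
\item the triangles on $\{a_1,b_0,e\}$ and on $\{b_0,a_0,e\}$, each fixing $a_2$;
\item the triangle on $\{a_0,b_2,e\}$, fixing $b_1$;
\item the rectangles $(a_0,a_2),(a_0,b_1),(e,b_1),(e,a_2)$ and $(a_0,a_1),(a_0,b_1),(b_1,b_2),(a_1,b_2)$.
\end{itemize}
In all of these, $c_2,\dots,c_{k-1}$ are appended to every vertex. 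Either way the hexagon is filled directly by $2$-cells, with no appeal to Proposition~\ref{prop:sp-radius-0}.
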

\begin{proof}
  Due to conditions on $a_0,\ a_1,\ a_2$ it follows that $S - \bigcup_{i}\a_i$ has exactly two connected components, $R_1$ and $R_2$, both with at least three boundary components corresponding to the curves $\a_0,\ \a_1,$ and $\a_2$. Let $\delta_i$ be disjoint arcs in $R_1$ with endpoints on the boundary components corresponding to $\a_i$ and $\a_{i+1}$ (indices taken modulo 3). Similarly define disjoint arcs $\delta'_i$ on $R_2$. Defining $\b_i = \delta_i\cup \delta'_i$ we get disjoint non-separating curves $b_0, b_1, b_2$ with the desired intersections with $a_i$. In order to prove the second part of the proposition, define $c = T_{a_1}(b_0)$. Then it follows from Proposition \ref{prop:dehn-int} that $i(c, b_0) = i(c, a_1) = i(c,b_1) = i(c,a_0) = 1$ and $i(c,b_2) = i(c, a_2) = 0$. If $(a_0, a_1, c_2,\cdots, c_{k-1})$ is a vertex in $\Gamma_k(S)$ then define the vertices $w_0 = (b_1,b_2, c_2,\cdots, c_{k-1}), w_1 = (c,b_2,c_2,\cdots, c_{k-1}), w_2 = (c,a_2,c_2,\cdots, c_{k-1})$. Then the path $p = v_0 \lr \cdots \lr v_5 \lr v_0$ as defined above can be split into paths of type 1, 2, and 3 as in Figure \ref{fig:hex-path}. 
  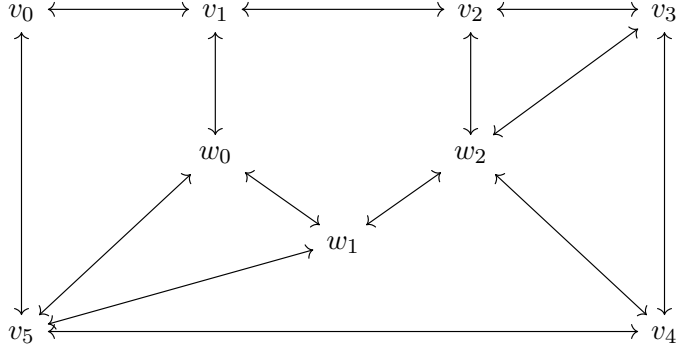
\begin{figure}
    \[\begin{tikzcd}
	{v_0} && {v_1} && {v_2} && {v_3} \\
	\\
	&& {w_0} && {w_2} \\
	&&& {w_1} \\
	{v_5} &&&&&& {v_4}
	\arrow[tail reversed, from=1-1, to=1-3]
	\arrow[tail reversed, from=1-3, to=1-5]
	\arrow[tail reversed, from=1-5, to=1-7]
	\arrow[tail reversed, from=1-7, to=5-7]
	\arrow[tail reversed, from=3-3, to=1-3]
	\arrow[tail reversed, from=3-3, to=4-4]
	\arrow[tail reversed, from=3-3, to=5-1]
	\arrow[tail reversed, from=3-5, to=1-5]
	\arrow[tail reversed, from=3-5, to=1-7]
	\arrow[tail reversed, from=3-5, to=5-7]
	\arrow[tail reversed, from=4-4, to=3-5]
	\arrow[tail reversed, from=4-4, to=5-1]
	\arrow[tail reversed, from=5-1, to=1-1]
	\arrow[tail reversed, from=5-7, to=5-1]
\end{tikzcd}\]
    \caption{Splitting the path $p$ into paths of type $1$, $2$, and $3$.}
    \label{fig:hex-path}
  \end{figure}
\end{proof}
Before beginning the proof of the next lemma, note that if $v$ and $w$ are vertices in $\Gamma_k(S)$ with common curves $(a_0, \cdots, a_m)$ then there is an $(a_0,\cdots, a_m)-$segment connecting $v$ and $w$. This follows directly from the proof of connectedness. This fact will be repeatedly used in the proof of the next lemma.
\begin{lemma}\label{lem:radius-zero}
  Let $p$ be a path in $\Gamma_k(S)$ of radius $0$ with respect to some curve $a_0$. Then $p$ is null-homotopic.
\end{lemma}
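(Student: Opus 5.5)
Radius $0$ with respect to $a_0$ means that every vertex $v_i$ of the closed path $p=v_0\lr\cdots\lr v_{n-1}\lr v_0$ contains a curve disjoint from $a_0$. The natural reading, following Wajnryb, is that $a_0$ is disjoint from every curve of every $v_i$: indeed $a_0\in v_j$, and the curves of each $v_i$ are pairwise disjoint. The plan follows Lemma 14 of \cite{wajnryb}. We decompose $p$ into maximal $a_0$-segments, where $a_0$ lies in every vertex, and the stretches of vertices not containing $a_0$. We then induct on the number of such maximal segments. If $p$ is a single $a_0$-segment, Proposition \ref{prop:sp-radius-0} applies directly. In general, the goal is to replace each stretch between segments by a path that again lies in an $a_0$-segment, or that shares some other common curve with its neighbours, so that the number of segments drops.

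The key steps, in order, are as follows.

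\textbf{Step 1.} Take a maximal stretch $v_s\lr\cdots\lr v_t$ of vertices not containing $a_0$, with $a_0\in v_{s-1}$ and $a_0\in v_{t+1}$. Each vertex $u$ of this stretch is disjoint from $a_0$. Since $u\cup\{a_0\}$ has $k+1$ curves, either it is a cut system, or $a_0$ together with some subfamily of $u$ separates $S$.

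\textbf{Step 2.} Let $u$ be a vertex for which $u\cup\{a_0\}$ is a cut system. Choose a curve $x\in u$ and set $u'=(u-\{x\})\cup\{a_0\}$. Using Lemma \ref{lem:common} (cut along $u-\{x\}$ and use connectivity of the Schmutz graph of the complement), connect $u$ to $u'$ by a path through vertices that all contain $u-\{x\}$. Do this for consecutive vertices $u_1\lr u_2$ of the stretch, choosing the removed curves compatibly: if $u_1-\{x\}=u_2-\{y\}$ we remove the moved curve itself; otherwise we remove a common curve. The resulting quadrilateral-like loop $u_1\lr u_2\lr\cdots\lr u_2'\lr\cdots\lr u_1'\lr u_1$ then has all its vertices sharing a common curve. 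This is either a curve of $u_1\cap u_2$, or, when the move is the one being swapped out, the remaining curves. Such a loop is null-homotopic by Proposition \ref{prop:sp-radius-0}, and it pushes the edge $u_1\lr u_2$ into the $a_0$-segment.

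\textbf{Step 3.} Handle the vertices $u$ where $a_0$ together with some curves of $u$ separates $S$. Here the separating triple configuration of Proposition \ref{prop:hex-path} enters. When $a_0$ together with two curves $a_1,a_2$ of $u$ separates while all pairwise unions are non-separating, the hexagon of Proposition \ref{prop:hex-path} gives a null-homotopic detour. It replaces the passage through vertices containing $\{a_1,a_2\}$ by one through vertices containing $a_0$. More generally, $u$ can first be modified along segments fixing all but one curve to reduce to this three-curve situation. These modifications use the fact noted before the lemma that vertices with common curves are joined by a segment fixing those curves. Each modification creates loops with a common curve, which are null-homotopic by Proposition \ref{prop:sp-radius-0}.

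\textbf{Step 4.} After Steps 2 and 3, every stretch is homotopic rel endpoints to a path inside an $a_0$-segment. This merges adjacent segments and lowers the count, so induction finishes the proof.

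The main obstacle is Step 3: making the separating configurations precise, and controlling how the minimal separating subfamily containing $a_0$ changes along an edge of the stretch. If this reduction proves awkward, I would try using the non-planar end as in Proposition \ref{prop:base-case}. Choose a curve $d$ far out in $S-K$, where $K$ is a compact subsurface containing all curves of $p$, and first homotope $p$ to a path all of whose vertices contain $d$. This is done vertex by vertex, each vertex being joined to a copy with one curve swapped for $d$, and the connecting squares have common curves. Then Proposition \ref{prop:sp-radius-0} applies to the result. The radius $0$ hypothesis would then be used only to guarantee that the swapped curves can be chosen consistently.
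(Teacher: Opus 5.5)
There is a genuine gap: the proposal rests on a misreading of the hypothesis. Radius $0$ with respect to $a_0$ means only that each vertex $v_i$ contains \emph{some} curve $b$ with $i(a_0,b)=0$, possibly $b=a_0$. Knowing that $a_0\in v_j$ and that the curves within one vertex are disjoint tells you nothing about the other curves of the other vertices.

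Under your stronger reading the lemma is trivial and Steps 1--4 are vacuous. If $a_0\in v_{s-1}$ and $a_0\notin v_s$, the elementary move replaces $a_0$ by some $m\in v_s$ with $i(a_0,m)=1$. So no stretch can exist, and Proposition \ref{prop:sp-radius-0} applies at once. Under the actual hypothesis, Step 1 is false. $v_s$ always contains such an $m$, and later vertices of a stretch may contain several curves meeting $a_0$. Then $(u-\{x\})\cup\{a_0\}$ is in general not a set of disjoint curves, and Step 2 cannot start.

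The weak hypothesis is also the one the paper needs. In the proof of Theorem \ref{thm:simply}, the lemma is applied to loops built from $a_0$-, $a_1$- and $b$-segments, whose other curves may meet the reference curve. Even granting your setup, Step 2 fails for $k=2$ when the removed curve is a common curve $z\in u_1\cap u_2$. The loop then shares no curve, since $(u_1\cap u_2)-\{z\}=\emptyset$. Step 3, where the real content lies, is only a sketch.

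The missing idea is the paper's bookkeeping. Write $p$ as a concatenation of maximal segments whose common curves $a_0,a_1,a_2,\dots$ are each disjoint from $a_0$; this is exactly what radius $0$ supplies. Then eliminate one segment at a time by cases on consecutive common curves:
\begin{enumerate}
\item If $i(a_1,a_2)=1$, a ladder of rectangles bypasses the $a_1$-segment. It is built over a path in $\Gamma_{k-1}$ of $S$ cut along $\alpha_1\cup\alpha_2$ that ends at vertices containing $a_0$.
\item If $a_2\in v_2$ and $\alpha_0\cup\alpha_2$ is non-separating, a vertex containing $a_0,a_1,a_2$, or else the hexagon of Proposition \ref{prop:hex-path}, does the bypass.
\item If $\alpha_0\cup\alpha_2$ separates, a fourth segment is brought in and loops with at most three segments are handled inductively.
\end{enumerate}

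Your fallback does not replace this, because tracking the swapped-out curve around a closed loop has monodromy. Take the hexagon of Proposition \ref{prop:hex-path}, which has radius $0$ with respect to $a_0$. Following $a_1\in v_0$ through the six moves returns $a_0$, and following $a_0$ returns $a_1$. So for $k=2$ neither choice at $v_0$ closes up. Closing the pushed-off loop would then require showing that a loop made of an $a_0$-segment, a $d$-segment and an $a_1$-segment is null-homotopic. That is again a radius-$0$ statement, so the argument is circular.
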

\begin{proof}
  We prove this by induction on the number of segments in the path $p$. Suppose that $a_0\in v_0$, where $v_0$ is a vertex in $p$. We call the maximal $a_0-$segment containing $v_0$ the first segment. From Proposition \ref{prop:sp-radius-0} it follows that if $p$ is a $a_0-$segment then $p$ is null homotopic. Suppose this is not the case and $v_1$ is the terminal vertex of the first segment. Let $a_1\in v_1$ be a curve distinct from $a_0$ which will serve as the common curve of the next segment, which we call the second segment. Let $v_2$ be the terminal element of second segment. If $p$ is just made up of these two segments then both $v_1$ and $v_2$ contain $a_0$ and $a_1$, thus there is a $(a_0, a_1)-$segment from $v_1$ to $v_2$, therefore implying that $p$ can be split into a $a_0-$segment and a $a_1-$segment, both of which are null-homotopic due to Proposition \ref{prop:sp-radius-0}. Suppose now that $p$ has more than $2$ segments. Then let $a_2$ be a curve in the vertex $v$ which follows $v_2$, so that $i(a_0, a_2)=0$. Let $v_3$ be the terminal vertex in the $a_2-$segment. Now we try to reduce the number of segments.\\

  \begin{figure}[h]
    \[\begin{tikzcd}
	\cdots & {v_0} && {v_1} && {v_2} & v && {v_3} & \cdots \\
	\\
	&&&&& {u_1} & {u_2}
	\arrow[color={rgb,255:red,214;green,92;blue,92}, tail reversed, from=1-1, to=1-2]
	\arrow["{a_0-\text{segment}}", color={rgb,255:red,214;green,92;blue,92}, squiggly, tail reversed, from=1-2, to=1-4]
	\arrow["{a_1-\text{segment}}", squiggly, tail reversed, from=1-4, to=1-6]
	\arrow["{q_0}"', color={rgb,255:red,214;green,92;blue,92}, squiggly, tail reversed, from=1-4, to=3-6]
	\arrow[tail reversed, from=1-6, to=1-7]
	\arrow["{q_1}"', squiggly, tail reversed, from=1-6, to=3-6]
	\arrow["{a_2-\text{segment}}", color={rgb,255:red,214;green,92;blue,92}, squiggly, tail reversed, from=1-7, to=1-9]
	\arrow["{q_2}", color={rgb,255:red,214;green,92;blue,92}, squiggly, tail reversed, from=1-7, to=3-7]
	\arrow[color={rgb,255:red,214;green,92;blue,92}, tail reversed, from=1-9, to=1-10]
	\arrow[color={rgb,255:red,214;green,92;blue,92}, tail reversed, from=3-6, to=3-7]
    \end{tikzcd}\]
    \caption{The construction in Case 1. Here $q_0$ is a $(a_0,a_1)-$segment, $q_1$ is a $a_1-$segment, and $q_2$ is a $a_2-$segment. The path in red has one less segment than $p$.}
    \label{fig:case1}
  \end{figure}
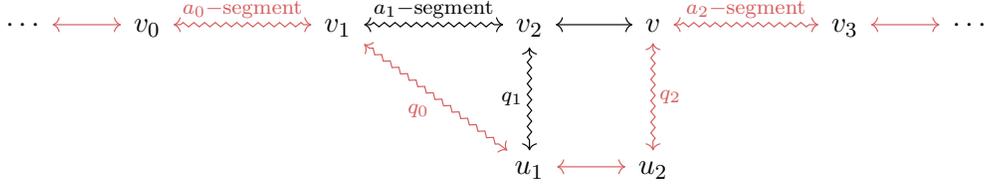
  \textbf{Case 1:} Suppose that $a_2\notin v_2$. Since $v_2\lr v$, it follows that $v-v_2 = \{a_2\}$ and $v_2-v = \{a_1\}$, implying that $i(a_1,a_2) = 1$. Let $a_1\cap a_2 = \{b_1,\cdots, b_{k_1}\}$. This means that we obtain a connected surface, $R$, by cutting $S$ along $a_1\cup a_2$. Moreover the image of the curve $a_0$ in $R$ is non-separating. Let $u = (b_1, \cdots, b_{k-1})$ and $u'$ be a vertex in $\Gamma_{k-1}(R)$ containing $a_0$. By connectedness, it follows that there is a path $q$ between $u$ and $u'$. Let $q_1, q_2$ be paths in $\Gamma_k(S)$ obtained by adding $a_1, a_2$ respectively to each vertex of $q$. The path $q_1$ connects $v_2$ to $u_1$ and $q_2$ connects $v$ to $u_2$, where $u_i$ contains both $a_i$ and $a_0$. Note that corresponding vertices $v', w'$ in paths $q_1, q_2$ respectively are connected by an edge as $i(a_1,a_2) = 1$. Thus, observe that if $v'\lr v''$ in $q_1$ and $w'\lr w''$ in $q_2$ then the path $v'\lr w' \lr w''\lr v'' \lr v'$ is a path of type 2. It follows from this that the closed path which starts at $v_2$ moves along $q_1$ till $u_1$, goes to $u_2$ via the connecting edge, $u_1\lr u_2$, then moves back up along $q_2$ to $v$ and returns back to $v_2$ via the connecting edge, is null-homotopic. Also since $v_1$ and $u_1$ contain curves $a_0, a_1$ there is a $(a_0, a_1)-$segment, $q_0$, connecting $v_1$ to $u_1$. The closed path which starts from $v_1$ and moves along $p$ till $v_2$, then moves along $q_1$ to $u_1$, and finally moves back up to $v_1$ along $q_0$ is also null-homotopic as it is a $a_1-$segment. Now, the red path, $p'$, in Figure \ref{fig:case1} is homotopic to $p$ and bypasses the $a_1-$segment in $p$, thus $p'$ has one less segment than $p$.\\

  \textbf{Case 2:} Suppose now that $a_2\in v_2$ and $\a_0 \cup \a_2$ is non-separating. In the case when $\a_0\cup \a_1\cup \a_2$ is non-separating then there is a vertex $v'\in \Gamma_k(S)$ so that $v'$ contains $a_0, a_1, a_2$. Then there is a $(a_0, a_1)-$segment from $v_1$ to $v'$ and a $(a_1,a_2)-$segment from $v_2$ and $v'$. These are shown in Figure \ref{fig:case2-1}.
  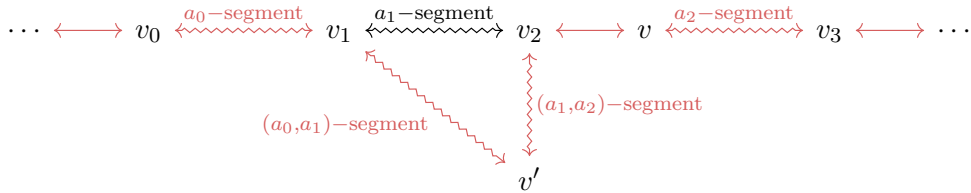
\begin{figure}[h]
   \[\begin{tikzcd}
	\cdots & {v_0} && {v_1} && {v_2} & v && {v_3} & \cdots \\
	\\
	&&&&& {v'}
	\arrow[color={rgb,255:red,214;green,92;blue,92}, tail reversed, from=1-1, to=1-2]
	\arrow["{a_0-\text{segment}}", color={rgb,255:red,214;green,92;blue,92}, squiggly, tail reversed, from=1-2, to=1-4]
	\arrow["{a_1-\text{segment}}", squiggly, tail reversed, from=1-4, to=1-6]
	\arrow["{(a_0,a_1)-\text{segment}}"', color={rgb,255:red,214;green,92;blue,92}, squiggly, tail reversed, from=1-4, to=3-6]
	\arrow[color={rgb,255:red,214;green,92;blue,92}, tail reversed, from=1-6, to=1-7]
	\arrow["{a_2-\text{segment}}", color={rgb,255:red,214;green,92;blue,92}, squiggly, tail reversed, from=1-7, to=1-9]
	\arrow[color={rgb,255:red,214;green,92;blue,92}, tail reversed, from=1-9, to=1-10]
	\arrow["{(a_1,a_2)-\text{segment}}"', color={rgb,255:red,214;green,92;blue,92}, squiggly, tail reversed, from=3-6, to=1-6]
   \end{tikzcd}\]
   \caption{The red path is homotopic to $p$.}
   \label{fig:case2-1}
  \end{figure}
  The red path in Figure \ref{fig:case2-1} is homotopic to $p$ as the closed path $v_1\leftrightsquigarrow v'\leftrightsquigarrow v_2 \leftrightsquigarrow v_1$ is null-homotopic by Proposition \ref{prop:sp-radius-0}. The red path has fewer segments than $p$ as we have bypassed the second segment.\\

  Now consider the case when the union $\a_0\cup \a_1 \cup \a_2$ is separating. Let $(b_2,\cdots, b_{k-1})$ be a cut system in the component of $S-(\a_0\cup\a_1\cup\a_2)$ which has a non-planar end. Then let $w_0 = (a_0, a_1, b_2,\cdots, b_{k-1}),\ w_1 = (a_1, a_2, b_2,\cdots, b_{k-1}),$ and $w_2 = (a_2, a_0, b_1,\cdots, b_{k-1})$ be vertices in $\Gamma_k(S)$. By Proposition \ref{prop:hex-path}, there is a null-homotopic closed path $q=w_0\squig w_1 \squig w_2 \squig w_0$ where the path $w_i \squig w_{i+1}$ is a $a_{i+1}-$segment (indices modulo 3) for $i=0,1,2$. Moreover, there is a $(a_0,a_1)-$segment, $q_1$, from $v_1$ to $w_0$ and a $(a_1, a_2)-$segment, $q_2$, from $v_2$ to $w_1$. The closed path $v_1\squig w_0 \squig w_1 \squig v_2 \squig v_1$ as shown in Figure \ref{fig:case2-2} is also null-homotopic as it is a $a_1-$segment, by Proposition \ref{prop:sp-radius-0}. It follows that the red is homotopic to $p$ and has fewer segments than $p$.\\

  \begin{figure}[h]
    \[\begin{tikzcd}
	\cdots & {v_0} && {v_1} && {v_2} & v && {v_3} & \cdots \\
	\\
	&&& {w_0} && {w_1} \\
	&&&& {w_2}
	\arrow[color={rgb,255:red,214;green,92;blue,92}, tail reversed, from=1-1, to=1-2]
	\arrow["{a_0-\text{segment}}"', color={rgb,255:red,214;green,92;blue,92}, squiggly, tail reversed, from=1-4, to=1-2]
	\arrow["{a_1-\text{segment}}", squiggly, tail reversed, from=1-4, to=1-6]
	\arrow["{(a_0, a_1)-\text{segment}}"', color={rgb,255:red,214;green,92;blue,92}, squiggly, tail reversed, from=1-4, to=3-4]
	\arrow[color={rgb,255:red,214;green,92;blue,92}, tail reversed, from=1-6, to=1-7]
	\arrow["{(a_1, a_2)-\text{segment}}", color={rgb,255:red,214;green,92;blue,92}, squiggly, tail reversed, from=1-6, to=3-6]
	\arrow["{a_2-\text{segment}}", color={rgb,255:red,214;green,92;blue,92}, squiggly, tail reversed, from=1-7, to=1-9]
	\arrow[color={rgb,255:red,214;green,92;blue,92}, tail reversed, from=1-9, to=1-10]
	\arrow["{a_1-\text{segment}}", squiggly, tail reversed, from=3-4, to=3-6]
	\arrow["{a_0-\text{segment}}"', color={rgb,255:red,214;green,92;blue,92}, squiggly, tail reversed, from=3-4, to=4-5]
	\arrow["{a_2-\text{segment}}"', color={rgb,255:red,214;green,92;blue,92}, squiggly, tail reversed, from=4-5, to=3-6]
    \end{tikzcd}\]
    \caption{The red path is homotopic to $p$.}
    \label{fig:case2-2}
  \end{figure}
  
  \textbf{Case 3:} Now suppose that $a_2\in v_2$ and $\a_0 \cup \a_2$ separates $S$. Suppose that $p$ has only three segments. The first vertex of the of first segment then contains both $a_0$ and $a_2$ as $i(a_0, a_2) = 0$. But this implies that $\a_0\cup \a_2$ is non-separating, which is a contradiction. Thus $p$ must contain at least four segments in this case. We assume from here on that closed paths with radius 0 and up to three segments are null homotopic. Let $a_3$ be a non-separating curve in the vertex following $v_3$ so that $i(a_0, a_3) = 0$. Note that $i(a_2, a_3) \neq 1$ as $\a_2$ is a separating curve in the surface obtained by cutting $S$ along $\a_0$. Since $v_3$ is the last vertex of the third segment and $a_3$ cannot be involved in the move from $v_3$ to the next vertex, it follows that $a_3\in v_3$. This also implies that $\a_2\cup \a_3$ is non-separating. Now, let $S_0, S_1$ be the two surfaces obtained by cutting $S$ along $\a_0 \cup \a_2$ and without loss of generality assume that $\a_4$ is a curve in $S_1$. Note that $\a_0\cup \a_3$ is non-separating in $S$ since otherwise $\a_2\cup \a_3$ is separating in $S$, a contradiction. Since $i(a_1,a_0) = i(a_1, a_2) = 0$ we can break this further down into two cases: either $\a_1$ is contained in $S_0$ or $\a_1$ is contained in $S_1$.\\

  Suppose that $\a_1$ is in $S_0$. Then $\a_0\cup \a_1 \cup \a_3$ is non-separating in $S$ as $\a_1$ is non-separating in $S_0$ and $\a_3$ is non-separating in $S_1$. This means that there is a vertex $w = (a_0, a_1, a_3, b_3 ,\cdots, b_{k-1})$ in $\Gamma_k(S)$. These segments from $v_1,\ v_2,\ v_3$ to $w$ as shown in Figure \ref{fig:case3-1}. The red path depicted in Figure \ref{fig:case3-1} is homotopic to $p$ by Proposition \ref{prop:sp-radius-0} and has fewer segments than $p$.\\
  \begin{figure}[h]
    \[\begin{tikzcd}
	\cdots & {v_0} && {v_1} && {v_2} & v && {v_3} & \cdots \\
	\\
	&&&& w
	\arrow[color={rgb,255:red,214;green,92;blue,92}, tail reversed, from=1-1, to=1-2]
	\arrow["{a_0-\text{segment}}"', color={rgb,255:red,214;green,92;blue,92}, squiggly, tail reversed, from=1-4, to=1-2]
	\arrow["{a_1-\text{segment}}", squiggly, tail reversed, from=1-4, to=1-6]
	\arrow["{(a_0, a_1)-\text{segment}}"', color={rgb,255:red,214;green,92;blue,92}, squiggly, tail reversed, from=1-4, to=3-5]
	\arrow[tail reversed, from=1-6, to=1-7]
	\arrow["{a_2-\text{segment}}", squiggly, tail reversed, from=1-7, to=1-9]
	\arrow[color={rgb,255:red,214;green,92;blue,92}, tail reversed, from=1-9, to=1-10]
	\arrow["{a_1-\text{segment}}"{description, pos=0.5}, squiggly, tail reversed, from=3-5, to=1-6]
	\arrow["{a_3-\text{segment}}"', color={rgb,255:red,214;green,92;blue,92}, squiggly, tail reversed, from=3-5, to=1-9]
\end{tikzcd}\]
    \caption{The red path is homotopic to $p$.}
    \label{fig:case3-1}
  \end{figure}

  Now suppose that $\a_1$ is contained in $S_1$. If $S_1$ is a surface of finite-type with genus $g$ then let $w,w'$ be maximal cut systems in $ \mathcal{HT}(S_1)$ containing $a_1, a_3$ respectively and let $t\in \Gamma_{k-g-1}(S_0)$ be some fixed vertex. If $S_1$ is infinite-type then let $w,w'\in \Gamma_{k-1}(S_1)$ be vertices containing $a_1, a_3$ respectively and let $t$ be empty. In either case, from connectivity, it follows that there is a path $q$ in $\Gamma_{k-1}(S)$ from $w$ to $w'$. Let $q_1$ be the path in $\Gamma_k(S)$ obtained by adding $a_0$ and the curves in $t$ to each vertex in $q$, and let $q_2$ be the curve obtained by adding $a_3$ and the curves in $t$ to each vertex in $q$. The end points, $w_2$ and $w'_2$, of $q_1$ contain the curves $a_0, a_1$ and $a_0, a_3$ respectively. Similarly there are end points, $w_3$ and $w'_3$, of $q_2$ that contains the curves $a_2, a_1$ and $a_2, a_3$ respectively. Figure \ref{fig:case3-2} shows segments connecting $v_i$ to $w_i$ for $i=2,3$, $v_1$ to $w'_2$, and $w'_3$ to $v_4$. The closed paths $v_1 \squig v_2 \squig w_2 \squig w'_2 \squig v_1$, $v_2\squig w_2 \squig w_3 \squig v_3 \squig v_2$, and $v_4 \squig v_3 \squig w_3 \squig w'_3 \squig v_4$ are null homotopic as they have zero radius and have at most three segments. Connecting corresponding edges in $q_1$ and $q_2$ by a path, we can split the closed path $w_2 \squig w_3 \squig w'_3 \squig w'_2 \squig w_2$ into closed paths with $a_0-$radius 0 and consisting only 3 segments, therefore concluding that this closed path is also null-homotopic. Thus the red path depicted in Figure \ref{fig:case3-2} is homotopic to $p$ and has fewer segments.
  \begin{figure}[h]
    \[\begin{tikzcd}
	\cdots & {v_0} && {v_1} && {v_2} && {v_3} && {v_4} & \cdots \\
	\\
	&&&&& {w_2} && {w_3} \\
	\\
	&&&&& {w_2'} && {w_3'}
	\arrow[color={rgb,255:red,214;green,92;blue,92}, tail reversed, from=1-1, to=1-2]
	\arrow["{a_0-\text{segment}}"', color={rgb,255:red,214;green,92;blue,92}, squiggly, tail reversed, from=1-4, to=1-2]
	\arrow["{a_1-\text{segment}}", squiggly, tail reversed, from=1-4, to=1-6]
	\arrow["{a_0-\text{segment}}"', color={rgb,255:red,214;green,92;blue,92}, curve={height=30pt}, squiggly, tail reversed, from=1-4, to=5-6]
	\arrow["{a_2-\text{segment}}", squiggly, tail reversed, from=1-6, to=1-8]
	\arrow["{a_1-\text{segment}}"', squiggly, tail reversed, from=1-6, to=3-6]
	\arrow["{a_3-\text{segment}}", squiggly, tail reversed, from=1-8, to=1-10]
	\arrow["{a_2-\text{segment}}", squiggly, tail reversed, from=1-8, to=3-8]
	\arrow[color={rgb,255:red,214;green,92;blue,92}, tail reversed, from=1-10, to=1-11]
	\arrow["{a_3-\text{segment}}", color={rgb,255:red,214;green,92;blue,92}, curve={height=-30pt}, squiggly, tail reversed, from=1-10, to=5-8]
	\arrow["{a_1-\text{segment}}", squiggly, tail reversed, from=3-6, to=3-8]
	\arrow[""{name=0, anchor=center, inner sep=0}, "{a_0-\text{segment}}"'{pos=0.4}, squiggly, tail reversed, from=3-6, to=5-6]
	\arrow[""{name=1, anchor=center, inner sep=0}, "{a_2-\text{segment}}", squiggly, tail reversed, from=3-8, to=5-8]
	\arrow["{a_3-\text{segment}}"', color={rgb,255:red,214;green,92;blue,92}, squiggly, tail reversed, from=5-6, to=5-8]
\end{tikzcd}\]
  \caption{The red path is homotopic to $p$.}
  \label{fig:case3-2}
  \end{figure}
\end{proof}
Now we are finally ready to prove Theorem \ref{thm:simply}.
\begin{proof}[Proof of Theorem \ref{thm:simply}]
  Suppose that $p = v_0 \lr \cdots \lr v_{n-1}\lr v_0$ is a path in $\Gamma_k(S)$. Let $K$ be a compact set containing all the curves in the vertices of $p$ and let $b$ be a non-separating curve in $S-K$. Let $a_0\in v_0$ and $\ a_1\in v_1$ be disjoint curves and let $w_0$ and $w_1$ be vertices in $\Gamma_k(S)$ which contain $b, a_0$ and $b,a_1$ respectively. The vertices $w_0$ and $v_0$ can be connected by a $a_0-$segment, $w_1$ and $v_1$ can be connected by a $a_1-$segment, and $w_0$ and $w_1$ can be connected by a $b-$segment. The closed path $v_0 \lr w_0 \lr w_1 \lr v_1 \lr v_0$ has $a_0-$radius 0 implying that the red path in Figure \ref{fig:simply} is homotopic to $p$. Since the red path has $b-$radius 0 it follows that it is null-homotopic and thus $p$ is null-homotopic.
  \begin{figure}[h]
    \[\begin{tikzcd}
	\cdots & {v_0} & {v_1} & \cdots \\
	\\
	{w_0} &&& {w_1}
	\arrow[color={rgb,255:red,214;green,92;blue,92}, tail reversed, from=1-1, to=1-2]
	\arrow[tail reversed, from=1-2, to=1-3]
	\arrow["{a_0-\text{segment}}"'{pos=0.6}, color={rgb,255:red,214;green,92;blue,92}, squiggly, tail reversed, from=1-2, to=3-1]
	\arrow[draw={rgb,255:red,214;green,92;blue,92}, tail reversed, from=1-3, to=1-4]
	\arrow["{a_1-\text{segment}}"{pos=0.6}, color={rgb,255:red,214;green,92;blue,92}, squiggly, tail reversed, from=1-3, to=3-4]
	\arrow["{b-\text{segment}}"{description}, color={rgb,255:red,214;green,92;blue,92}, squiggly, tail reversed, from=3-1, to=3-4]
\end{tikzcd}\]
    \caption{The red path is homotopic to $p$.}
    \label{fig:simply}
  \end{figure}
\end{proof}
\section{Automorphisms of $\Gamma_k(S)$ and Rigidity}\label{aut}
In this section we prove Theorem \ref{thm:geometric} and henceforth assume that $S$ has no planar ends and boundary. We state the results proved in \cite{irmak2004} without proof whenever the proof can be directly translated to the infinite-type case without changes. Using Theorem \ref{thm:her} we already have that $\Aut(\Gamma_1(S)) \cong \map^*(S)$, as $\Gamma^1_1(S)= \mathcal{G}(S)$. First we describe an action of $\Aut(\Gamma_k(S))$ for $k\geq 2$ on the Schmutz graph, $\mathcal{G}(S)$.
\vspace{0.5em}
\newline
\textbf{Action of $\text{Aut}(\Gamma_k(S))$ on $\mathcal{G}(S)$.} Let $a$ be a non-separating curve and $f\in \Aut(\Gamma_k(S))$. Let $a_1,\cdots, a_{k-1}$ be curves in $S$ so that $v = (a,a_1,\cdots, a_{k-1})$ is a cut-system. Let $b$ be another non-separating curve with $i(a,b) = 1$ so that $w = (b,a_1, \cdots, a_{k-1})$ is also a cut-system. By construction, $v \lr w$ and since $f$ is simplicial it follows that $f(v) \lr f(w)$. Define $\tilde{f}(a)$ as the unique curve in $f(v) - f(w)$. The following lemma states that $\tilde{f}$ is well defined.
\begin{lemma}[Lemma 5, \cite{irmak2004}]
  The curve $\tilde{f}(a)$ is independent of all choices. $\square$
\end{lemma}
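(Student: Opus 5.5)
The choices in the definition of $\tilde f(a)$ are the completion $a_1,\dots,a_{k-1}$ and the curve $b$. I would show that $\tilde f(a)$ does not depend on either, using a connectivity argument in the spirit of Lemma~5 of \cite{irmak2004}.

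\textbf{Step 1 (fix $v$, vary $b$).} Fix $v=(a,a_1,\dots,a_{k-1})$ and let $b,b'$ both be admissible, with $w=(b,a_1,\dots)$ and $w'=(b',a_1,\dots)$. I first treat the case $i(b,b')=1$. Then $v,w,w'$ form a triangle, and $f$ sends it to a triangle $f(v),f(w),f(w')$. The key claim is that in any triangle the three vertices share $k-1$ curves and differ in one curve each. Indeed, if $f(v)-f(w)=\{x\}$ and $f(w')$ is adjacent to both, a short case check on which curve is swapped shows that $f(w')$ must also contain the common $k-1$ curves. Hence $f(v)-f(w)=f(v)-f(w')$.

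For general admissible $b,b'$, cut $S$ along $a_1,\dots,a_{k-1}$ to get a surface $R$. The admissible $b$ are exactly the non-separating curves in $R$ meeting $a$ once. I would show these are connected under the relation $i(b,b')=1$. This follows as in Irmak's argument from connectivity of the link of $a$ in the Schmutz graph of $R$, using twists $T_a$ and intermediate curves in the complement of $a$. This connectivity statement is the main obstacle. My first attempt would be to reduce to finite-type subsurfaces of $R$ containing $a,b,b'$, where Irmak's result applies directly.

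\textbf{Step 2 (vary the completion).} Now take two completions $v=(a,a_1,\dots)$ and $v'=(a,a'_1,\dots)$ containing $a$. By the remark before Lemma~\ref{lem:radius-zero}, there is an $a$-segment joining them, so it suffices to treat adjacent vertices $v\lr v'$ with $a$ common, say $a_1$ replaced by $a'_1$ where $i(a_1,a'_1)=1$. Choose $b$ with $i(b,a)=1$ that is disjoint from $a_1$, $a'_1$ and the other curves, so that it is admissible for both $v$ and $v'$. This is possible because the complement of $v\cup v'$ has non-planar ends, so enough genus is available. Then $v,w,v',w'$ form a rectangle (type~2). Its image under $f$ is a 4-cycle, which must be a rectangle; I would verify that $f$ preserves the cell types by the local counting of Irmak. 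In a rectangle, the curve swapped along the edge $f(v)\lr f(w)$ is the same curve swapped along $f(v')\lr f(w')$. Hence $f(v)-f(w)=f(v')-f(w')$, and combining with Step~1 gives independence of all choices.
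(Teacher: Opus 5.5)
Your proposal is correct and is essentially the argument the paper defers to Irmak. The choice of $b$ is handled by connectivity of $X_a$ together with the fact that every $3$-cycle keeps $k-1$ curves fixed, and the choice of $a_1,\dots,a_{k-1}$ is handled by connectivity of $\Gamma$ (through $a$-segments) together with rectangles.

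Two minor corrections. First, the common $b$ in Step~2 exists because $a$ stays non-separating in the complement of $a_1\cup a_1'\cup a_2\cup\cdots\cup a_{k-1}$. This follows from $v$ being a cut system, not from non-planar ends. Second, rectangles go to rectangles automatically if $f$ preserves $2$-cells. If $f$ is only a graph automorphism, the only other kind of $4$-cycle is one whose four vertices share $k-1$ curves, and you can rule it out by applying your triangle/$X_a$-connectivity argument to $f^{-1}$.
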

The proof of the above lemma involves several steps. First, for a given non-separating curve $a$ the subcomplex, $X_a\subset \mathcal{G}(S)$, whose vertices are curves which intersect $a$ exactly once is shown to be connected following the proof of Lemma 3 in \cite{irmak2004}. This fact along with the first half of Theorem \ref{thm:simply} is used to show that $\tilde{f}(a)$ is independent of the choices of $b$ and $a_1,\cdots, a_{k-1}$. Further, it is shown that $\tilde{f}$ is an automorphism of $ \mathcal{G}(S)$.
\begin{proposition}[Proposition 8, \cite{irmak2004}] \label{prop:simp}
  The map $\tilde{f}: \mathcal{G}(S) \to \mathcal{G}(S)$ is an automorphism. 
\end{proposition}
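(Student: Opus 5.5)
To show $\tilde{f}$ is an automorphism of $\mathcal{G}(S)$, I will prove three things: it is a bijection on vertices, it sends edges to edges, and its inverse also sends edges to edges. The natural candidate for the inverse is $\widetilde{f^{-1}}$, which is well defined by the preceding lemma applied to $f^{-1}\in\Aut(\Gamma_k(S))$.

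\textbf{Step 1: $\widetilde{f^{-1}}$ inverts $\tilde f$.} Fix a non-separating curve $a$. Choose a cut system $v=(a,a_1,\dots,a_{k-1})$ and a curve $b$ with $i(a,b)=1$ such that $w=(b,a_1,\dots,a_{k-1})$ is a vertex. Then $\tilde f(a)$ is the unique curve in $f(v)-f(w)$.

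Now $f(v)\lr f(w)$ is an elementary move, and its two distinguished curves are $\tilde f(a)\in f(v)$ and the unique curve $c\in f(w)-f(v)$, with $i(\tilde f(a),c)=1$. So the pair $f(v),f(w)$ is a legitimate choice of data for computing $\widetilde{f^{-1}}(\tilde f(a))$. Since that computation is independent of all choices, we get
\[
\widetilde{f^{-1}}(\tilde f(a)) = \text{the unique curve in } v-w = a.
\]
By symmetry, $\tilde f\circ\widetilde{f^{-1}}=\mathrm{id}$ as well, so $\tilde f$ is a bijection on non-separating curves.

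\textbf{Step 2: $\tilde f$ preserves edges.} Let $i(a,b)=1$. The plan is to find a single edge $v\lr w$ of $\Gamma_k(S)$ that computes both $\tilde f(a)$ and $\tilde f(b)$. Since $S$ has infinite genus, the complement of a regular neighbourhood of $\a\cup\b$ (a one-holed torus) contains a cut system $a_1,\dots,a_{k-1}$ of non-separating curves. Then
\[
v=(a,a_1,\dots,a_{k-1}), \qquad w=(b,a_1,\dots,a_{k-1})
\]
are vertices with $v\lr w$.

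By definition, $\tilde f(a)$ is the curve in $f(v)-f(w)$. By the independence lemma applied with the roles of $a$ and $b$ swapped, $\tilde f(b)$ is the curve in $f(w)-f(v)$. Because $f(v)\lr f(w)$ is an elementary move, these two curves intersect exactly once, so $i(\tilde f(a),\tilde f(b))=1$.

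Applying the same argument to $f^{-1}$ and using Step 1 shows that non-edges are also preserved. Hence $\tilde f$ is a graph automorphism.

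\textbf{Main obstacle.} The only delicate point is the independence-of-choices lemma itself. Its real content is that $f(v)$ and $f(w)$ differ in exactly one curve and that this curve does not depend on the choice of $w$. Since that lemma is assumed, the remaining risk is smaller: in Step 2 we need to confirm that the complementary cut system exists for every $k$. This is where infinite genus is used (no planar ends and empty boundary), and it is immediate because the complement of the torus neighbourhood of $\a\cup\b$ is connected of infinite genus.
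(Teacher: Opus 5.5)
Your proof is correct and follows the paper's argument: the paper also chooses $a_1,\dots,a_{k-1}$ disjoint from $a$ and $b$, so that the single edge $f(v)\lr f(w)$ yields both $\tilde f(a)$ and $\tilde f(b)$, and it also takes $\widetilde{f^{-1}}$ as the inverse. The differences are only in detail. The paper cites Lemma 7 of Irmak for $\widetilde{f^{-1}}=\tilde f^{-1}$, whereas you verify it directly by using $f(v),f(w)$ as the defining data. You also justify the existence of the complementary cut system via the one-holed torus neighbourhood of $\alpha\cup\beta$, which the paper only asserts.
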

\begin{proof}
  Let $a,b$ be non-separating curves so that $i(a,b)=1$. There exists a collection of curves $a_1, \cdots, a_{k-1}$ which are disjoint from both $a$ and $b$ such that $v =(a, a_1, \cdots, a_{k-1})$ and $w = (b, a_1, \cdots, a_{k-1})$ are cut systems. Thus $f(v) \leftrightarrow f(w)$ as $f\in \Aut(\Gamma_k(S))$ and $v\leftrightarrow w$. Since by definition $\tilde{f}(a) = f(v)-f(w)$ and $\tilde{f}(b) = f(w) - f(v)$ it follows that $i(\tilde{f}(a), \tilde{f}(b)) = 1$. This shows that $\tilde{f}$ is a simplicial map. The inverse map to $\tilde{f}$ is given by $\widetilde{f^{-1}}$, see Lemma 7 in \cite{irmak2004}.
\end{proof}
Next, we generalize Proposition 9 in \cite{irmak2004}. Since $\tilde{f}$ can also be viewed as a map on the non-separating complex $ \mathcal{N}(S)$, it is natural to ask whether it is an automorphism of $ \mathcal{N}(S)$? The next proposition provides a positive answer.
\begin{proposition}
  The map $\tilde{f}: \mathcal{N}(S)\to \mathcal{N}(S)$ is also an automorphism.
\end{proposition}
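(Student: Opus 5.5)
We need to show that $\tilde{f}$, already known to be a bijection on non-separating curves (Proposition \ref{prop:simp}, with inverse $\widetilde{f^{-1}}$), preserves disjointness in both directions. Since the inverse is again of the form $\widetilde{f^{-1}}$ for $f^{-1}\in\Aut(\Gamma_k(S))$, it suffices to prove one direction: if $a,c$ are distinct disjoint non-separating curves, then $i(\tilde f(a),\tilde f(c))=0$.

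The plan is to follow Irmak's argument for Proposition 9, reading disjointness off the combinatorics of $\Gamma_k(S)$. We split into two cases, according to whether $\a\cup\g$ is non-separating.

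\textbf{Case 1: $\a\cup\g$ is non-separating.} Assume first that $k\ge 2$. Complete $(a,c)$ to a cut system $v=(a,c,a_2,\dots,a_{k-1})$; this is possible because $S$ has infinite genus. Then $f(v)$ is a cut system containing $\tilde f(a)$, computed using $v$ with $a$ as the moved curve. It also contains $\tilde f(c)$, computed using the same $v$ with $c$ as the moved curve. By Lemma 5 of \cite{irmak2004}, $\tilde f$ is independent of choices, so both values are the ones attached to $v$. Since the curves of a cut system are pairwise disjoint, $i(\tilde f(a),\tilde f(c))=0$. The case $k=1$ needs separate treatment: there $\Gamma_1^1(S)=\mathcal G(S)$, and Theorem \ref{thm:her} already gives the result.

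\textbf{Case 2: $\a\cup\g$ separates $S$.} We characterize disjointness through $\mathcal G(S)$, which $\tilde f$ preserves. Cutting along $\a\cup\g$ gives two components $R_1,R_2$, each with two boundary circles and positive genus; positive genus holds because $a$ and $c$ are distinct and non-separating, so neither component is an annulus. Choose a curve $d$ with $i(d,a)=i(d,c)=0$ such that both $\a\cup\delta$ and $\g\cup\delta$ are non-separating; a non-separating curve inside the genus of $R_1$ works. By Case 1, $\tilde f(d)$ is disjoint from $\tilde f(a)$ and from $\tilde f(c)$. Next, choose a curve $e$ with $i(e,a)=i(e,c)=1$, built from arcs in $R_1$ and $R_2$ joining the two boundary circles. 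This exhibits $\tilde f(a)$ and $\tilde f(c)$ as a pair whose structure relative to $\tilde f(d)$ and $\tilde f(e)$ forces them to be disjoint.

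A cleaner route is to use connectivity of disjointness via chains. Pick a non-separating curve $d$ disjoint from both $a$ and $c$ with $\a\cup\delta$ and $\g\cup\delta$ non-separating; we still need more than this, namely that $\a\cup\g\cup\delta$ is non-separating. This fails when $\a\cup\g$ separates, which is the obstacle. To get around it, we follow Irmak and characterize "disjoint, jointly separating" pairs by a Schmutz-graph property: $a$ and $c$ are disjoint with $\a\cup\g$ separating if and only if $a\neq c$ and every curve meeting $a$ once meets $c$ an odd number of times. That parity statement is not expressible in $\mathcal G$, however, so instead we use the Case 1 relations. By the construction in Proposition \ref{prop:hex-path}, there is a curve $x$ with $i(x,a)=i(x,c)=0$ that is disjoint-non-separating (in the sense of Case 1) with each of $a$ and $c$. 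We then realize a pentagon or rectangle configuration in $\Gamma_k(S)$ whose image under $f$ forces $\tilde f(a)$ and $\tilde f(c)$ to be disjoint. Concretely, take the hexagonal path of Proposition \ref{prop:hex-path} with $a_0=a$, $a_1=x$, $a_2=c$. Its image is a closed path of the same shape, and in that image $\tilde f(a)$ and $\tilde f(c)$ both lie in the cut systems $f(v_2),f(v_3),\dots$. Reading the consecutive vertices $(a_0,a_2,\dots)$ then shows that $\tilde f(a)$ and $\tilde f(c)$ appear together in a single cut system $f(v_2)$, so they are disjoint. This requires checking that $(a,c,\dots)$ is a vertex, which holds because $\a\cup\g$ is non-separating as soon as a third curve is present in the hexagon configuration. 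Reconciling this hypothesis with the separating case is the main obstacle, and it is where I would follow Irmak's Proposition 9 most closely.
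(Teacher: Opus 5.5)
Your reduction to one direction and your Case 1 are fine and agree with the paper. The gap is Case 2 ($\alpha\cup\gamma$ separating), and none of your three routes closes it.

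In the $d,e$ route you only assert that the images being disjoint from $\tilde f(d)$ and meeting $\tilde f(e)$ once forces $i(\tilde f(a),\tilde f(c))=0$. These two constraints alone do not force this. If $y$ satisfies them and $x$ is a curve disjoint from $d$ and $e$ with $i(x,y)\neq 0$, then $T_x^n(y)$ also satisfies them, while $i(y,T_x^n(y))=n\,i(x,y)^2$.

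The parity criterion is false as stated: for $i(x,a)=1$, the distinct curves $a$ and $T_x^2(a)$ satisfy it but intersect. In any case, as you note, it is not expressible in $\mathcal{G}(S)$.

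The hexagon route is circular. Proposition \ref{prop:hex-path} with $a_0=a$, $a_2=c$ requires $\alpha\cup\gamma$ to be non-separating, which is exactly the negation of your case hypothesis. Correspondingly, the hexagon's vertex $(a_0,a_2,\dots)=(a,c,\dots)$ is not a cut system when $\alpha\cup\gamma$ separates. Whether $\alpha\cup\gamma$ separates depends only on these two curves, so adding a third curve cannot change it. And if $(a,c,\dots)$ were a vertex, you would already be in Case 1.

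The missing idea is a configuration that uses only relations $\tilde f$ is already known to preserve (meeting once, or disjoint with non-separating union), yet is rigid enough to pin down the images. The paper starts from an exhaustion of $S$ by compact subsurfaces whose successive pieces have genus one. From it, it builds a chain $C$ of curves with these properties:
\begin{enumerate}
\item any two members are either disjoint with non-separating union or meet once;
\item a single member $z_0\in C$ meets each of $a$ and $c$ once, and every other member is disjoint from both.
\end{enumerate}
Then $\tilde f(C)$ is a chain of the same type, and $\tilde f(a)$, $\tilde f(c)$ are disjoint from every member of $\tilde f(C)$ except $\tilde f(z_0)$. Cutting along $\tilde f(C)\setminus\{\tilde f(z_0)\}$ leaves annular pieces. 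Two distinct curves each lying in such a piece must lie in different pieces, hence are disjoint.

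Alternatively, in this section's setting you could skip the case analysis entirely. Proposition \ref{prop:simp} already makes $\tilde f$ an automorphism of $\mathcal{G}(S)$, and $S$ has no planar ends. So Theorem \ref{thm:her}, which you used only for $k=1$, applies for every $k$. It shows $\tilde f$ is induced by a homeomorphism, which preserves disjointness in both directions.
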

\begin{proof}
  Using the same argument as before $\tilde{f}$ is a bijection. It only remains to show that it is also simplicial. Let $a,b\in \mathcal{N}(S)$ be two vertices so that $a\lr b$, or in other words $i(a,b)=0$. Consider the case when $\a\cup \b$ is non-separating in $S$. Let $a_2, \cdots a_{k-1}$ be non-separating curves so that $v = (a,b,a_2,\cdots, a_{k-1})$ is a vertex in $\Gamma_k(S)$. Since $\tilde{f}(a)$ and $\tilde{f}(b)$ are in $f(v)$ by definition it follows that $i(\tilde{f}(a), \tilde{f}(b)) = 0$.\\

  Now suppose that $\a \cup \b$ is separating in $S$. Then without loss of generality we can assume that they are as shown in the Figure \ref{fig:big_surface}. We construct a system of curves on $S$ in the following way:
  \begin{enumerate}
    \item Let $\{K_n\}_{n\in \N}$ be an exhaustion of $S$ by compact subsurfaces with $K_1$ being a genus $1$ subsurface with two boundary components containing the curves $\a$ and $\b$ and each component of $K_n - K_{n-1}$ being a genus $1$ surface with $b\geq 2$ boundary components. It is always possible to construct such an exhaustion by choosing an appropriate family of separating curves, see Figure \ref{fig:big_surface}. Let $C_1$ be the system of curves and arcs in $K_1$ as shown in Figure \ref{fig:small_surface}. Let $\g_0$ be the curve which intersects $\a$ and $\b$ exactly once.
    \item Let $\Sigma_{1,b}$ be the topological type of a component, $X$, of $K_{n}- K_{n-1}$. Let $C_X$ be a chain of filling non-separating curves and essential arcs in $X$, as shown in the Figure \ref{fig:small_surface}. Let $C_n = \bigcup_X C_X$ be a system of curves on $K_n-K_{n-1}$ for $n\geq 2$.
    \item Let $C$ be a chain of curves in $S$ consisting of all the curves in $C_n$ for every $n$ and the curves obtained by joining the two arcs which meet at every component of $\bigcup_n\partial K_n$. See Figure \ref{fig:big_surface} for an example.
  \end{enumerate}
  For any two curves $\g, \g'\in C$ either $i(c, c') = 0$ or $i(c,c') = 1$. Since $\g\cup \g'$ is non-separating in $S$, the non-separating case implies that $i(\tilde{f}(c), \tilde{f}(c'))=0$ if $i(c,c')=0$ and it follows from Proposition \ref{prop:simp} that $i(\tilde{f}(c), \tilde{f}(c')) = 1$ if $i(c,c')=1$. Similarly, when $\g\in C$ is such that $i(a,c) = i(b,c) = 0$ it follows that $i(\tilde{f}(a), \tilde{f}(c)) = i(\tilde{f}(b), \tilde{f}(c)) = 0$ and $i(\tilde{f}(a), \tilde{f}(c_0)) = i(\tilde{f}(b), \tilde{f}(c_0)) = 1$. Let $d = \tilde{f}(c)$ for each $\g \in C$ and let $d_0 = \tilde{f}(c_0)$. Then, as discussed above, the collection $\tilde{f}(C)$ forms a chain of non-separating curves in $S$. Note that $\tilde{f}(C)$ is also a chain of non-separating curves in $S$ due to $\tilde{f}$ being an automorphism of $ \mathcal{G}(S)$. Cutting $S$ along $\tilde{f}(C)-d_0$ we get disjoint copies of an annuli $A$ with a puncture in each boundary. Since $\tilde{f}(a),\ \tilde{f}(b)$ are disjoint from every $d\neq d_0$ it follows that they correspond to curves in some copy of the annuli $A$. Since $\tilde{f}(a)\neq \tilde{f}(b)$ it follows that they lie in distinct copies of $A$ and thus are disjoint in $S$.
\end{proof}
\begin{figure}[t]
\centering
\def\svgwidth{\textwidth}
\begin{subfigure}{0.5\textwidth}
  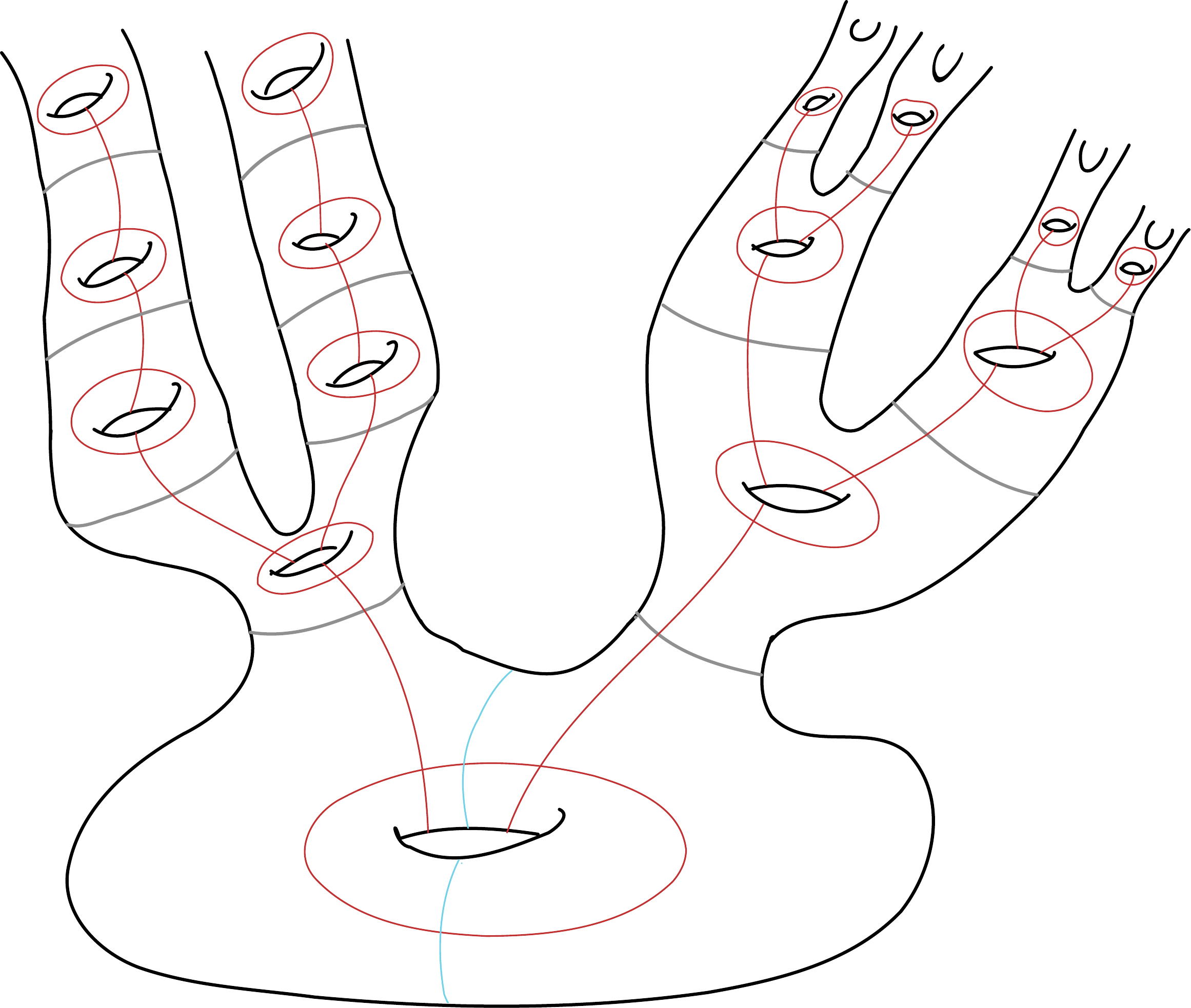
  \caption{This figure shows an example of the construction of $C$ on a surface with non-planar end space being the Cantor set union two isolated points. The blue curves are $\a$ and $\b$, the gray curves are boundaries of $\partial K_n$, and the red curves are elements of $C$.}
  \label{fig:big_surface}
\end{subfigure}
\hfill
\def\svgwidth{0.7\textwidth}
\begin{subfigure}{0.3\textwidth}
  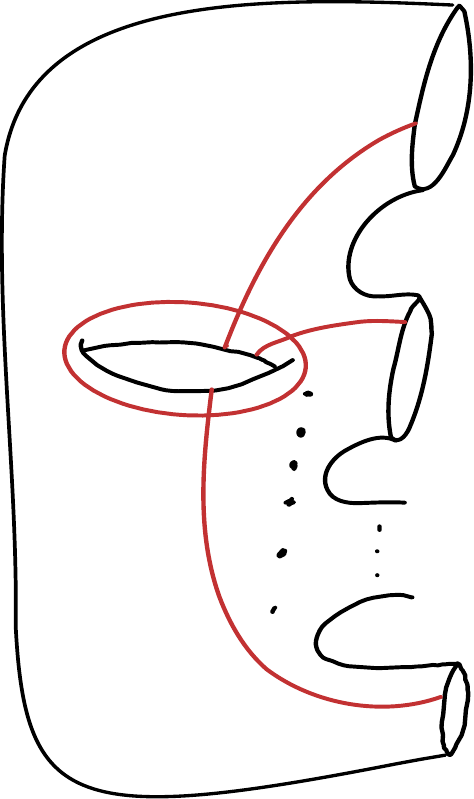
  \caption{The choice of chain on a surface of genus $1$ and $b$ boundary components.}
  \label{fig:small_surface}
\end{subfigure}
\caption{Example of constructions of $C_X$ and $C$.}
\end{figure}
\begin{proof}[Proof of Theorem \ref{thm:geometric}]
 Let $\Phi: \Aut(\Gamma_k(S)) \to \Aut(\mathcal{G}(S))$ be the map $f \mapsto \tilde{f}$. Suppose that $f,g\in \Aut(\Gamma_k(S))$ then by Lemma 7 in \cite{irmak2004} it follows that $\Phi(f\circ g) = \Phi(f)\circ \Phi(g)$. Suppose that $f\in \text{Ker}(\Phi)$ and let $v = (a_0,\cdots, a_{k-1})$ be a vertex in $\Gamma_k(S)$ then by construction $\tilde{f}(a_i) \in f(v)$ for all $i$, and thus $f(v) = (\tilde{f}(a_0), \cdots, \tilde{f}(a_{k-1})) = (a_0, \cdots, a_{k-1}) = v$. This proves that $\Phi$ is a monomorphism. Moreover, $\Phi$ is surjective since if $h\in \Aut(\mathcal{G}(S))$, then by Theorem \ref{thm:her} it follows that $h$ is induced from a homeomorphism $F:S\to S$. Let $f = F_*$ be the induced automorphism on $\Gamma_k(S)$. If $a$ is a non-separating curve, $v = (a, a_1, \cdots, a_{k-1})$, and $w = (b, a_1, \cdots, a_{k-1})$ are vertices in $\Gamma_k(S)$ as defined earlier then the unique curve in $f(v) - f(w)$ is just given by $F(a) = h(a)$ as $f,h$ are induced by $F$. Thus $\Phi$ is an epimorphism. Hence we have $\Aut(\Gamma_k(S)) \cong \Aut( \mathcal{G}(S)) \cong \map^*(S)$. This completes the proof of Theorem \ref{thm:geometric}.  
\end{proof}

Note that the map $\map^*(S) \to \Aut(\Gamma_k(S))$ induced by the natural action of $\map^*(S)$ on $\Gamma_k(S)$ factors through the natural map $\map^*(S) \to \Aut( \mathcal{G}(S))$ via $\Phi^{-1}$. In particular, the map $\map^*(S)\to \Aut(\Gamma_k(S))$ induced by the natural action is an isomorphism. Call this isomorphism $\Psi: \map^*(S)\to \Aut(\Gamma_k(S))$.

\begin{proof}[Proof of Corollary \ref{cor:iso}]
If $S$ and $S'$ are infinite-type surfaces and let $h: \Gamma_k(S)\to \Gamma_k(S')$ be an isomorphism. Then the map, $h_*: \Aut(\Gamma_k(S))\to \Aut(\Gamma_k(S'))$ induced by conjugation, i.e. $f\mapsto h\circ f\circ h^{-1}$, is an isomorphism. The composition $\Psi^{-1}\circ h_*\circ \Psi: \map^*(S) \to \map^*(S')$ is an isomorphism which, by Theorem 1.1 of \cite{bavard}, is induced by composition by a homeomorphism $G:S\to S'$, i.e. $\Psi^{-1}\circ h_*\circ \Psi: [F] \mapsto [G\circ F\circ G^{-1}]$. Thus we have $h_*(F_*) = G_*\circ F_*\circ G^{-1}_*$, for every mapping class $[F]$. Since by Theorem \ref{thm:geometric} every automorphism of $\Gamma_k(S)$ is induced by a mapping class we conclude that $h_*(f) = G_* \circ F_* \circ G^{-1}_*$ where $f = F_*$. This completes the proof of Corollary \ref{cor:iso}.
\end{proof}
\bibliography{refs.bib}
\vspace{5em}
\textsc{Correspondence Email:} \texttt{manvendra.somvanshi@pm.me}
\end{document}

%% file: big_surface.pdf_tex
\begingroup%
  \makeatletter%
  \providecommand\color[2][]{%
    \errmessage{(Inkscape) Color is used for the text in Inkscape, but the package 'color.sty' is not loaded}%
    \renewcommand\color[2][]{}%
  }%
  \providecommand\transparent[1]{%
    \errmessage{(Inkscape) Transparency is used (non-zero) for the text in Inkscape, but the package 'transparent.sty' is not loaded}%
    \renewcommand\transparent[1]{}%
  }%
  \providecommand\rotatebox[2]{#2}%
  \newcommand*\fsize{\dimexpr\f@size pt\relax}%
  \newcommand*\lineheight[1]{\fontsize{\fsize}{#1\fsize}\selectfont}%
  \ifx\svgwidth\undefined%
    \setlength{\unitlength}{1114.10054273bp}%
    \ifx\svgscale\undefined%
      \relax%
    \else%
      \setlength{\unitlength}{\unitlength * \real{\svgscale}}%
    \fi%
  \else%
    \setlength{\unitlength}{\svgwidth}%
  \fi%
  \global\let\svgwidth\undefined%
  \global\let\svgscale\undefined%
  \makeatother%
  \begin{picture}(1,0.8468954)%
    \lineheight{1}%
    \setlength\tabcolsep{0pt}%
    \put(0,0){\includegraphics[width=\unitlength,page=1]{big_surface.pdf}}%
  \end{picture}%
\endgroup%

%% file: finite_surface.pdf_tex
\begingroup%
  \makeatletter%
  \providecommand\color[2][]{%
    \errmessage{(Inkscape) Color is used for the text in Inkscape, but the package 'color.sty' is not loaded}%
    \renewcommand\color[2][]{}%
  }%
  \providecommand\transparent[1]{%
    \errmessage{(Inkscape) Transparency is used (non-zero) for the text in Inkscape, but the package 'transparent.sty' is not loaded}%
    \renewcommand\transparent[1]{}%
  }%
  \providecommand\rotatebox[2]{#2}%
  \newcommand*\fsize{\dimexpr\f@size pt\relax}%
  \newcommand*\lineheight[1]{\fontsize{\fsize}{#1\fsize}\selectfont}%
  \ifx\svgwidth\undefined%
    \setlength{\unitlength}{227.0794298bp}%
    \ifx\svgscale\undefined%
      \relax%
    \else%
      \setlength{\unitlength}{\unitlength * \real{\svgscale}}%
    \fi%
  \else%
    \setlength{\unitlength}{\svgwidth}%
  \fi%
  \global\let\svgwidth\undefined%
  \global\let\svgscale\undefined%
  \makeatother%
  \begin{picture}(1,1.68780156)%
    \lineheight{1}%
    \setlength\tabcolsep{0pt}%
    \put(0,0){\includegraphics[width=\unitlength,page=1]{finite_surface.pdf}}%
  \end{picture}%
\endgroup%